\documentclass[a4paper,12pt,reqno]{amsart}
\usepackage{amsfonts}
\usepackage{amsmath}
\usepackage{amssymb}
\usepackage[a4paper]{geometry}
\usepackage{mathrsfs}
\usepackage{hyperref}
\renewcommand\eqref[1]{(\ref{#1})} 
%
%
\setlength{\textwidth}{15.2cm}
\setlength{\textheight}{22.7cm}
\setlength{\topmargin}{0mm}
\setlength{\oddsidemargin}{3mm}
\setlength{\evensidemargin}{3mm}
\setlength{\footskip}{1cm}


\numberwithin{equation}{section}
\theoremstyle{plain}
\newtheorem{thm}{Theorem}[section]
\newtheorem{prop}[thm]{Proposition}
\newtheorem{cor}[thm]{Corollary}

\theoremstyle{definition}

\newtheorem{rem}[thm]{Remark}



\begin{document}

   \title[Horizontal Hardy, Rellich and Caffarelli-Kohn-Nirenberg inequalities]
   {On horizontal Hardy, Rellich, Caffarelli-Kohn-Nirenberg and $p$-sub-Laplacian inequalities on stratified groups}

\author[Michael Ruzhansky]{Michael Ruzhansky}
\address{
  Michael Ruzhansky:
  \endgraf
  Department of Mathematics
  \endgraf
  Imperial College London
  \endgraf
  180 Queen's Gate, London SW7 2AZ
  \endgraf
  United Kingdom
  \endgraf
  {\it E-mail address} {\rm m.ruzhansky@imperial.ac.uk}
  }
\author[Durvudkhan Suragan]{Durvudkhan Suragan}
\address{
  Durvudkhan Suragan:
  \endgraf
  Institute of Mathematics and Mathematical Modelling
  \endgraf
  125 Pushkin str.
  \endgraf
  050010 Almaty
  \endgraf
  Kazakhstan
  \endgraf
  and
  \endgraf
  Department of Mathematics
  \endgraf
  Imperial College London
  \endgraf
  180 Queen's Gate, London SW7 2AZ
  \endgraf
  United Kingdom
  \endgraf
  {\it E-mail address} {\rm d.suragan@imperial.ac.uk}
  }

\thanks{The authors were supported in parts by the EPSRC
 grant EP/K039407/1 and by the Leverhulme Grant RPG-2014-02,
 as well as by the MESRK grant 5127/GF4.}

     \keywords{Hardy inequality, Rellich inequality, Caffarelli-Kohn-Nirenberg inequality, $p$-sub-Laplacian, horizontal estimate, stratified group}
     \subjclass[2010]{22E30, 43A80}

     \begin{abstract}
     In this paper, we present a version of horizontal weighted Hardy-Rellich type and Caffarelli-Kohn-Nirenberg type inequalities on stratified groups and study some of their consequences. Our results reflect on many  results previously known in special cases. Moreover, a new simple proof of the Badiale-Tarantello conjecture \cite{BadTar:ARMA-2002} on the best constant of a Hardy type inequality is provided. We also show a family of Poincar\'e inequalities as well as inequalities involving the weighted and unweighted $p$-sub-Laplacians.
     \end{abstract}
     \maketitle

\section{Introduction}

Consider the following inequality
\begin{equation}\label{HRn-p}
\left\|\frac{f(x)}{\|x\|}\right\|_{L^{p}(\mathbb{R}^{n})} \leq \frac{p}{n-p}\left\| \nabla f\right\|_{L^{p}(\mathbb{R}^{n})},\quad 1\leq p<n,
\end{equation}
where $\nabla$ is the standard gradient in
$\mathbb{R}^{n}$, $f\in C_{0}^{\infty}(\mathbb{R}^{n}
\backslash \{0\})$, $\|x\|=\sqrt{x_{1}^{2}+...+x_{n}^{2}},$
and the constant $\frac{p}{n-p}$ is known to be sharp.
The one-dimensional version of \eqref{HRn-p} for $p=2$ was first discovered by
Hardy in \cite{Hardy1919}, and then for other $p$ in \cite{Hardy:1920}, see also \cite{Hardy:1920} for the story behind these inequalities. Since then the inequality \eqref{HRn-p} has been widely analysed in many different settings (see e.g. \cite{Adimurthi-Sekar}-\cite{Costa-2008}, \cite{DAmbrosio-Hardy}, \cite{DAmbrosio-Dipierro:Hardy-manifolds-AHP-2014}, \cite{Davies}, \cite{Davies-Hinz}, \cite{EKL:Hardy-p-Lap}, \cite{HHLT-Hardy-many-particles}, \cite{Laptev15}). Nowadays there is vast literature on this subject, for example, the MathSciNet search shows about 5000 research works related to this topic.
On homogeneous Carnot groups (or stratified groups) inequalities of this type have been also intensively investigated (see e.g. \cite{DGP-Hardy-potanal}, \cite{GL}, \cite{GolKom}, \cite{Grillo:Hardy-Rellich-PA-2003}, \cite{Jin-Shen:Hardy-Rellich-AM-2011}, \cite{Kogoj-Sonner:Hardy-lambda-CVEE-2016}, \cite{Kombe-Ozaydin:Hardy-Rellich-mfds},  \cite{Lian:Rellich}, \cite{NZW-Hardy-p}). In this case inequality \eqref{HRn-p} takes the form
\begin{equation}\label{HG-p}
\left\|\frac{f(x)}{d(x)}\right\|_{L^{p}(\mathbb G)} \leq \frac{p}{Q-p}\left\| \nabla_{H} f\right\|_{L^{p}(\mathbb G)},\quad Q\geq 3,\; 1<p<Q,
\end{equation}
where $Q$ is the homogeneous dimension of the stratified group $\mathbb G$, $\nabla_{H}$ is the horizontal gradient, and $d(x)$ is the so-called $\mathcal L$-gauge, which is a particular homogeneous quasi-norm obtained from the fundamental solution of the sub-Laplacian, that is, $d(x)^{2-Q}$ is a constant multiple of Folland's \cite{Folland-FS} (see also \cite{FS-Hardy}) fundamental solution of the sub-Laplacian on $\mathbb G$. For a short review in this direction and some further discussions we refer to our recent papers \cite{Ruzhansky-Suragan:Layers, Ruzhansky-Suragan:critical, Ruzhansky-Suragan:Hardy,Ruzhansky-Suragan:identities, Ruzhansky-Suragan:uncertainty} and \cite{ORS16} as well as to references therein.

The main aim of this paper is to give analogues of Hardy type inequalities on stratified groups with horizontal gradients and weights. Actually we obtain more than that, i.e., we prove general (horizontal) weighted Hardy, Rellich and Caffarelli-Kohn-Nirenberg type inequalities on stratified groups. Our results extend known Hardy type inequalities on abelian and Heisenberg groups, for example (see e.g. \cite{BadTar:ARMA-2002} and \cite{DAmbrosio-Difur04}).
For the convenience of the reader let us now briefly recapture the main results of this paper.
Let $\mathbb{G}$ be a homogeneous stratified group of homogeneous dimension
$Q$, and let $X_{1},\ldots,X_{N}$ be left-invariant vector fields giving the first
stratum of the Lie algebra of $\mathbb{G}$, $\nabla_{H}=(X_{1},\ldots,X_{N})$, with the sub-Laplacian
\begin{equation*}\label{sublap0}
\mathcal{L}=\sum_{k=1}^{N}X_{k}^{2}.
\end{equation*}
Denote the variables on $\mathbb{G}$ by $x=(x',x'')\in \mathbb{G},$ where $x'$ corresponds to the first stratum.
For precise definitions we refer to Section \ref{SEC:prelim}.

Thus, to summarise briefly, in this paper we establish the following results:
\begin{itemize}
\item {\bf (Hardy inequalities)}
Let $\mathbb{G}$ be a stratified group
with $N$ being the dimension of the first stratum, and let $\alpha,\,\beta\in \mathbb{R}$.
Then for all complex-valued functions $f\in C^{\infty}_{0}(\mathbb{G}\backslash\{x'=0\})$ and $1<p<\infty,$ we have the following $L^{p}$-Caffarelli-Kohn-Nirenberg type inequality
\begin{equation}\label{introeqLpCKN}
\frac{|N-\gamma|}{p}
\left\|\frac{f}{|x'|^{\frac{\gamma}{p}}}\right\|^{p}_{L^{p}(\mathbb{G})}\leq
\left\|\frac{1}{|x'|^{\alpha}} \nabla_{H} f\right\|_{L^{p}(\mathbb{G})}\left\|\frac{f}{|x'|^{\frac{\beta}{p-1}}}\right\|^{p-1}_{L^{p}(\mathbb{G})},
\end{equation}
where $\gamma=\alpha+\beta+1$ and $|\cdot|$ is the Euclidean norm on $\mathbb{R}^{N}$. If $\gamma\neq N$ then the constant $\frac{|N-\gamma|}{p}$ is sharp.
In the special case of $\alpha=0$, $\beta=p-1$ and $\gamma=p$, inequality \eqref{introeqLpCKN}
implies
\begin{equation}\label{EQ:Hardy-mod}
\frac{|N-p|}{p}\left\|\frac{1}{|x'|}f
\right\|_{L^{p}(\mathbb{G})}\leq\left\|\nabla_{H} f\right\|_{L^{p}(\mathbb{G})},\quad 1<p<\infty,
\end{equation}
where the constant $\frac{|N-p|}{p}$ is sharp for $p\not=N$. One novelty of this is that we do not
require that $p<N$. In turn, for $1<p<N$,
the inequality \eqref{EQ:Hardy-mod} gives a stratified group version of $L^{p}$-Hardy inequality
\begin{equation}\label{EQ:Hardy-N}
\left\|\frac{1}{|x'|}f
\right\|_{L^{p}(\mathbb{G})}\leq\frac{p}{N-p}\left\|\nabla_{H} f\right\|_{L^{p}(\mathbb{G})},
\quad 1<p<N,
\end{equation}
again with $\frac{p}{N-p}$ being the best constant.
\item {\bf (Badiale-Tarantello conjecture)}
Let $x=(x',x'')\in\mathbb{R}^{N}\times\mathbb{R}^{n-N}.$ In \cite{BadTar:ARMA-2002}
Badiale and Tarantello proved that for $2\leq N\leq n$ and $1\leq p<N$ there exists a constant $C_{n,N,p}$ such that
\begin{equation}\label{BT}
\left\|\frac{1}{|x'|}f
\right\|_{L^{p}(\mathbb{R}^{n})}\leq C_{n,N,p}\left\|\nabla f\right\|_{L^{p}(\mathbb{R}^{n})},
\end{equation}
where $\nabla$ is the standard Euclidean gradient.
Clearly, for $N=n$ this gives the classical Hardy's inequality with
the best constant
$$C_{n,p}=\frac{p}{n-p}.$$
It was conjectured in \cite[Remark 2.3]{BadTar:ARMA-2002} that
the best constant in \eqref{BT} is given by
\begin{equation}
C_{N,p}=\frac{p}{N-p}.
\end{equation}
This conjecture was proved in \cite{Secchi-Smets-Willen}.
As a consequence of our techniques, we give a new proof of the Badiale-Tarantello conjecture.
\item {\bf (Critical Hardy inequality)}
For $p=N$, the inequality \eqref{EQ:Hardy-N} fails. In this case the Hardy inequality \eqref{HRn-p} is replaced by a logarithmic version, an analogue of which we establish on stratified groups as well.
For a bounded domain $\Omega\subset \mathbb{G}$ with $0\in\Omega$ and $f\in C^{\infty}_{0}(\Omega\backslash\{x'=0\})$ we have
\begin{equation}\label{introeqcritcase}
\left\|\frac{f}{|x'|{\rm log}\frac{R}{|x'|}}
\right\|_{L^{N}(\Omega)}\leq  \frac{N}{N-1}\left\|\frac{x'}{|x'|}\cdot\nabla_{H} f(x)\right\|_{L^{N}(\Omega)},\quad N\geq2,
\end{equation}
where $R=\underset{x\in\Omega}{\sup}|x'|.$
In the abelian case of $\mathbb{G}=\mathbb R^{n}$ being the Euclidean space, inequality \eqref{introeqcritcase} reduces to the logarithmic Hardy inequality of Edmunds and Triebel \cite{ET99}.

\item {\bf ($p$-sub-Laplacian)}
Let $\mathbb{G}$ be a stratified group
with $N$ being the dimension of the first stratum, and let $1<p<\infty$ with $\frac{1}{p}+\frac{1}{q}=1$ and $\alpha,\,\beta\in \mathbb{R}$ be such that $\frac{p-N}{p-1}\leq\gamma:=\alpha+\beta+1\leq 0$.
Then for all $f\in C^{\infty}_{0}(\mathbb{G}\backslash\{x'=0\})$ we have
\begin{equation}\label{eqLpHR}
\frac{N+\gamma(p-1)-p}{p}
\left\|\frac{\nabla_{H}f}{|x'|^{\frac{\gamma}{p}}}\right\|^{p}_{L^{p}(\mathbb{G})}\leq
\left\|\frac{1}{|x'|^{\alpha}} \mathcal{L}_{p} f\right\|_{L^{p}(\mathbb{G})}\left\|\frac{\nabla_{H} f}{|x'|^{\beta}}\right\|_{L^{q}(\mathbb{G})},
\end{equation}
where $|\cdot|$ is the Euclidean norm on $\mathbb{R}^{N}$ and $\mathcal{L}_{p}$ is the $p$-sub-Laplacian operator defined in \eqref{pLap}.

\item {\bf (Higher order Hardy-Rellich inequalities)}
Let $1<p<\infty.$
For any $k,m\in \mathbb{N}$ we have
\begin{align*}
\frac{|N-\gamma|}{p}
& \left\|\frac{f}{|x'|^{\frac{\gamma}{p}}}  \right\|^{p}_{L^{p}
(\mathbb{G})} \\ & \leq
\widetilde{A}_{\alpha,m}\widetilde{A}_{\beta,k}\left\|\frac{1}{|x'|^{\alpha-m}}\nabla_{H}^{m+1}
f\right\|_{L^{p}(\mathbb{G})}\left\|\frac{1}{|x'|^{\frac{\beta}{p-1}-k}}\nabla_{H}^{k}f\right\|^{p-1}_{L^{p}(\mathbb{G})},
\end{align*}
for any real-valued function $f\in C^{\infty}_{0}(\mathbb{G}\backslash\{x'=0\})$,  $\gamma=\alpha+\beta+1$, and $\alpha\in\mathbb{R}$ such that $\prod_{j=0}^{m-1}
\left|N-p(\alpha-j)\right|\neq0,$ and
$$\widetilde{A}_{\alpha,m}:=p^{m}\left[\prod_{j=0}^{m-1}
\left|N-p(\alpha-j)\right|\right]^{-1},$$
as well as $\beta\in\mathbb{R}$ such that $\prod_{j=0}^{k-1}
\left|N-p(\frac{\beta}{p-1}-j)\right|\neq0,$ and
$$\widetilde{A}_{\beta,k}:=p^{k(p-1)}\left[\prod_{j=0}^{k-1}
\left|N-p\left(\frac{\beta}{p-1}-j\right)\right|\right]^{-(p-1)}.$$

\item {\bf ($L^{N}$-Poincar\'e inequality)}
The following $L^{N}$-Poincar\'e inequality (see e.g. \cite{Tak15}) for the horizontal gradient is proved:
$$\|f\|_{L^{N}(\Omega)}\leq R\left\|\nabla_{H}f\right\|_{L^{N}(\Omega)},$$
 where $R=\underset{x\in\Omega}{\sup}|x'|,$ for $f\in C^{\infty}_{0}(\mathbb{G})$ and
 any bounded domain $\Omega\subset\mathbb{G}.$
Note that the inequality \eqref{EQ:Hardy-mod} implies
\begin{equation}\label{Lp-Poincare}
\frac{|N-p|}{Rp}\left\|f
\right\|_{L^{p}(\Omega)}\leq\left\|\nabla_{H} f\right\|_{L^{p}(\Omega)},\quad 1<p<\infty,
\end{equation}
for $f\in C^{\infty}_{0}(\Omega\backslash\{x'=0\})$ and $R=\underset{x\in\Omega}{\sup}|x'|$. However, \eqref{Lp-Poincare}
gives a trivial inequality when $N=p$.

\item {\bf (Weighted $p$-sub-Laplacian)}
Let $0\leq F\in C^{\infty}(\mathbb{G})$ and $0\leq\eta\in L^{1}_{loc}(\mathbb{G})$ be such that
\begin{equation}\label{weighteddiv}
\eta F^{p-1}\leq -\mathcal{L}_{p,\rho}F,\quad {\rm a.e.\; in}\;\mathbb{G},
\end{equation}
where $\mathcal{L}_{p,\rho}$ is a weighted $p$-sub-Laplacian defined in \eqref{wsubL}.
Then we have
\begin{equation}\label{weightedthm1ineqn0}
\|\eta^{\frac{1}{p}}f\|^{p}_{L^{p}(\mathbb{G})}+C_{p}\left\|
\rho^{\frac{1}{p}}F\nabla_{H}\frac{f}{F}\right\|^{p}_{L^{p}(\mathbb{G})}
\leq\|\rho^{\frac{1}{p}}\nabla_{H}f\|^{p}_{L^{p}(\mathbb{G})},
\end{equation}
for all real-valued functions $f\in C^{\infty}_{0}(\mathbb{G})$ and $2\leq p<\infty$.
Here $C_{p}$ is a positive constant.
For $1<p<2$ the inequality \eqref{weightedthm1ineqn0} is replaced by an analogous one
while for $p=2$ it becomes an identity, see Remark \ref{remw2} and Remark \ref{remw1}, respectively.
\end{itemize}

In Section \ref{SEC:prelim} we very briefly recall the main concepts of stratified groups and fix the notation.
 In Section \ref{Sec3} we derive versions of $L^{p}$-Caffarelli-Kohn-Nirenberg type inequalities  on stratified groups and discuss their consequences including higher order cases as well as a new proof of the Badiale-Tarantello conjecture. An analogue of the critical Hardy inequality is proved in Section \ref{Sec4}.
Hardy-Rellich type inequalities and their weighted versions on stratified groups are presented and analysed in Section \ref{Sec5}.

\medskip

\section{Preliminaries}
\label{SEC:prelim}

A Lie group $\mathbb{G}=(\mathbb{R}^{n},\circ)$ is called a stratified group
(or a homogeneous Carnot group) if it satisfies the following conditions:

(a) For some natural numbers $N+N_{2}+...+N_{r}=n$, that is $N=N_{1},$
the decomposition $\mathbb{R}^{n}=\mathbb{R}^{N}\times...\times\mathbb{R}^{N_{r}}$ is valid, and
for every $\lambda>0$ the dilation $\delta_{\lambda}: \mathbb{R}^{n}\rightarrow \mathbb{R}^{n}$
given by
$$\delta_{\lambda}(x)\equiv\delta_{\lambda}(x',x^{(2)},...,x^{(r)}):=(\lambda x',\lambda ^{2}x^{(2)},...,\lambda^{r}x^{(r)})$$
is an automorphism of the group $\mathbb{G}.$ Here $x'\equiv x^{(1)}\in \mathbb{R}^{N}$ and $x^{(k)}\in \mathbb{R}^{N_{k}}$ for $k=2,...,r.$

(b) Let $N$ be as in (a) and let $X_{1},...,X_{N}$ be the left invariant vector fields on $\mathbb{G}$ such that
$X_{k}(0)=\frac{\partial}{\partial x_{k}}|_{0}$ for $k=1,...,N.$ Then
$${\rm rank}({\rm Lie}\{X_{1},...,X_{N}\})=n,$$
for every $x\in\mathbb{R}^{n},$ i.e. the iterated commutators
of $X_{1},...,X_{N}$ span the Lie algebra of $\mathbb{G}.$

That is, we say that the triple $\mathbb{G}=(\mathbb{R}^{n},\circ, \delta_{\lambda})$ is a stratified group. See also e.g. \cite{FR} for discussions from the Lie algebra point of view.
Here $r$ is called a step of $\mathbb{G}$ and the left invariant vector fields $X_{1},...,X_{N}$ are called
the (Jacobian) generators of $\mathbb{G}$.
The number
$$Q=\sum_{k=1}^{r}kN_{k},\quad N_{1}=N,$$
is called the homogeneous dimension of $\mathbb{G}$.
\smallskip
The second order differential operator
\begin{equation}\label{sublap}
\mathcal{L}=\sum_{k=1}^{N}X_{k}^{2},
\end{equation}
is called the (canonical) sub-Laplacian on $\mathbb{G}$.
The sub-Laplacian $\mathcal{L}$ is a left invariant homogeneous hypoelliptic differential operator and it is known that $\mathcal{L}$ is elliptic if and only if the step of $\mathbb{G}$ is equal to 1. We also recall that the standard Lebesque measure $dx$ on $\mathbb R^{n}$ is the Haar measure for $\mathbb{G}$ (see, e.g. \cite[Proposition 1.6.6]{FR}).
The left invariant vector field $X_{j}$ has an explicit form and satisfies the divergence theorem,
see e.g. \cite{Ruzhansky-Suragan:Layers} for the derivation of the exact formula: more precisely, we can write
\begin{equation}\label{Xk0}
X_{k}=\frac{\partial}{\partial x'_{k}}+
\sum_{l=2}^{r}\sum_{m=1}^{N_{l}}a_{k,m}^{(l)}(x',...,x^{(l-1)})
\frac{\partial}{\partial x_{m}^{(l)}},
\end{equation}
see also \cite[Section 3.1.5]{FR} for a general presentation.
We will also use the following notations
$$\nabla_{H}:=(X_{1},\ldots, X_{N})$$
for the horizontal gradient,
$${\rm div}_{H} v:=\nabla_{H}\cdot v$$
for the horizontal divergence,
\begin{equation}\label{pLap}
 \mathcal{L}_{p}f:={\rm div}_{H}(|\nabla_{H}f|^{p-2}\nabla_{H}f),\quad 1<p<\infty,
\end{equation}
for the horizontal $p$-Laplacian (or $p$-sub-Laplacian), and

$$|x'|=\sqrt{x'^{2}_{1}+\ldots+x'^{2}_{N}}$$ for the Euclidean norm on $\mathbb{R}^{N}.$

The explicit representation \eqref{Xk0} allows us to have the identities
\begin{equation}\label{gradgamma}
|\nabla_{H}|x'|^{\gamma}|=\gamma|x'|^{\gamma-1},
\end{equation}
and
\begin{equation}\label{divgamma}
{\rm div}_{H}\left(\frac{x'}{|x'|^{\gamma}}\right)=\frac{\sum_{j=1}^{N}|x'|^{\gamma}X_{j}x'_{j}-\sum_{j=1}^{N}x'_{j}\gamma|x'|^{\gamma-1}X_{j}|x'| }{|x'|^{2\gamma}}=\frac{N-\gamma}{|x'|^{\gamma}}
\end{equation}
 for all $\gamma\in\mathbb{R},\; |x'|\neq 0.$

\section{Horizontal $L^{p}$-Caffarelli-Kohn-Nirenberg type inequalities and consequences}
\label{Sec3}In this section and in the sequel we adopt all the notation introduced in Section \ref{SEC:prelim} concerning stratified groups and the horizontal operators.
\subsection{Caffarelli-Kohn-Nirenberg inequalities}
In this section we establish the following horizontal $L^{p}$-Caffarelli-Kohn-Nirenberg type inequalities  on the stratified group $\mathbb{G}$ and then discuss their consequences and proofs. The proof is analogous to \cite{ORS16} in the case of homogeneous groups, but here we rely on the
divergence theorem rather on the polar decomposition which is less suitable for the stratified setting.
We refer e.g. to \cite{CKN-1984} and \cite{CW-2001} for Euclidean settings of Caffarelli-Kohn-Nirenberg inequalities.

\begin{thm}\label{LpCKN}
Let $\mathbb{G}$ be a homogeneous stratified group
with $N$ being the dimension of the first stratum, and let $\alpha,\,\beta\in \mathbb{R}$.
Then for any $f\in C^{\infty}_{0}(\mathbb{G}\backslash\{x'=0\}),$ and all $1<p<\infty,$ we have
\begin{equation}\label{eqLpCKN}
\frac{|N-\gamma|}{p}
\left\|\frac{f}{|x'|^{\frac{\gamma}{p}}}\right\|^{p}_{L^{p}(\mathbb{G})}\leq
\left\|\frac{1}{|x'|^{\alpha}} \nabla_{H} f\right\|_{L^{p}(\mathbb{G})}\left\|\frac{f}{|x'|^{\frac{\beta}{p-1}}}\right\|^{p-1}_{L^{p}(\mathbb{G})},
\end{equation}
where $\gamma=\alpha+\beta+1$ and $|\cdot|$ is the Euclidean norm on $\mathbb{R}^{N}$. If $\gamma\neq N$ then the constant $\frac{|N-\gamma|}{p}$ is sharp.
\end{thm}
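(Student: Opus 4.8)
The plan is to run the standard "divergence-theorem + integration-by-parts" argument adapted to the horizontal setting, using the key identity \eqref{divgamma} to create the weight $\frac{N-\gamma}{|x'|^{\gamma}}$. Concretely, I would start from the vector field $V=\frac{x'}{|x'|^{\gamma}}$ on $\mathbb{G}\backslash\{x'=0\}$, for which \eqref{divgamma} gives ${\rm div}_{H}V=\frac{N-\gamma}{|x'|^{\gamma}}$. Then I multiply by $|f|^{p}$ and apply the divergence theorem (valid since $f\in C_{0}^{\infty}(\mathbb{G}\backslash\{x'=0\})$ and the $X_{k}$ satisfy the divergence theorem, as recalled in Section \ref{SEC:prelim}):
\begin{equation*}
(N-\gamma)\int_{\mathbb{G}}\frac{|f|^{p}}{|x'|^{\gamma}}\,dx=\int_{\mathbb{G}}|f|^{p}\,{\rm div}_{H}V\,dx=-\int_{\mathbb{G}}V\cdot\nabla_{H}(|f|^{p})\,dx.
\end{equation*}

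Next I expand $\nabla_{H}(|f|^{p})=p|f|^{p-2}f\,\nabla_{H}f$ (working with the real and imaginary parts so this makes sense for complex $f$; one can also write $|f|^{p-1}$ times the appropriate unit factor and bound by $p|f|^{p-1}|\nabla_{H}f|$), so that
\begin{equation*}
|N-\gamma|\int_{\mathbb{G}}\frac{|f|^{p}}{|x'|^{\gamma}}\,dx\leq p\int_{\mathbb{G}}\frac{|f|^{p-1}}{|x'|^{\gamma-1}}\,|\nabla_{H}f|\,dx,
\end{equation*}
using $|V|=|x'|^{1-\gamma}$. Now I split the weight on the right as $\gamma-1=\alpha+\beta$ and distribute $|f|^{p-1}=|f|\cdot|f|^{p-2}$: write the integrand as $\bigl(|x'|^{-\alpha}|\nabla_{H}f|\bigr)\cdot\bigl(|x'|^{-\beta/(p-1)}|f|\bigr)^{p-1}$, checking that the exponents match ($\alpha+\beta\cdot\frac{p-1}{p-1}$... indeed $\alpha+\beta=\gamma-1$). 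Applying Hölder's inequality with exponents $p$ and $p/(p-1)$ then yields exactly
\begin{equation*}
|N-\gamma|\int_{\mathbb{G}}\frac{|f|^{p}}{|x'|^{\gamma}}\,dx\leq p\left\|\frac{\nabla_{H}f}{|x'|^{\alpha}}\right\|_{L^{p}(\mathbb{G})}\left\|\frac{f}{|x'|^{\beta/(p-1)}}\right\|^{p-1}_{L^{p}(\mathbb{G})},
\end{equation*}
which is \eqref{eqLpCKN} after dividing by $p$.

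For the sharpness claim (when $\gamma\neq N$), I would argue that equality in the two estimates used forces, in Hölder, proportionality $|\nabla_{H}f|^{p}\sim |x'|^{p\alpha-p\beta/(p-1)}|f|^{p}$ and, in the Cauchy--Schwarz step, $\nabla_{H}f$ parallel to $x'/|x'|$; this points to radial-type extremizers of the form $|x'|^{-\delta}$ with $\delta=\frac{N-\gamma}{p}$ in the appropriate sense. Since such a power is not compactly supported away from $\{x'=0\}$, I would exhibit a sequence $f_{\varepsilon}\in C_{0}^{\infty}(\mathbb{G}\backslash\{x'=0\})$ obtained by truncating $|x'|^{-(N-\gamma)/p}$ (cut off near $|x'|=0$, near $|x'|=\infty$, and in the $x''$-variables) and compute that the ratio of the two sides tends to $\frac{|N-\gamma|}{p}$; the divergence-structure of the argument makes the boundary/error terms from truncation negligible in the limit. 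The main obstacle I anticipate is precisely this last step: verifying that the truncation errors vanish requires some care with the $x''$-dependence (the group is not just $\mathbb{R}^{N}$), though since all the weights depend only on $x'$ and the test functions can be taken as products of a function of $x'$ and a fixed bump in $x''$, the computation should reduce to the Euclidean $\mathbb{R}^{N}$ sharpness computation; handling complex-valued $f$ cleanly in the identity step (rather than just the inequality) is the only other place to be slightly careful.
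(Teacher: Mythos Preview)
Your argument for the inequality \eqref{eqLpCKN} is exactly the paper's: start from ${\rm div}_{H}\bigl(x'/|x'|^{\gamma}\bigr)=(N-\gamma)/|x'|^{\gamma}$, integrate by parts against $|f|^{p}$, use $|x'\cdot\nabla_{H}f|\leq|x'|\,|\nabla_{H}f|$, then H\"older with exponents $p$ and $p/(p-1)$ after splitting the weight as $\gamma-1=\alpha+\beta$.

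The only difference is in the sharpness step. The paper does not run a truncation-and-limit argument; instead it writes down an explicit candidate extremizer
\[
g(x)=\begin{cases} e^{-\frac{C}{\lambda}|x'|^{\lambda}}, & \lambda:=\alpha-\tfrac{\beta}{p-1}+1\neq0,\\[1mm] |x'|^{-C}, & \lambda=0,\end{cases}
\qquad C=\Bigl|\tfrac{N-\gamma}{p}\Bigr|,
\]
and checks directly that it forces equality in the H\"older step (and, being a function of $|x'|$ only, also in the Cauchy--Schwarz step). Your power-function candidate $|x'|^{-(N-\gamma)/p}$ is the paper's $\lambda=0$ case; for $\lambda\neq0$ the exponential choice is cleaner because it has good decay at infinity in $x'$ when $\lambda>0$. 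Your truncation plan is a legitimate alternative and in fact makes explicit something the paper glosses over, since $g\notin C_{0}^{\infty}(\mathbb{G}\backslash\{x'=0\})$ either; your observation that one may take a product of a radial function in $x'$ with a fixed bump in $x''$ reduces the computation to $\mathbb{R}^{N}$ and is the right way to handle the stratified structure.
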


In the abelian case ${\mathbb G}=(\mathbb R^{n},+)$, we have
$N=n$, $\nabla_{H}=\nabla=(\partial_{x_{1}},\ldots,\partial_{x_{n}})$, so \eqref{eqLpCKN}  implies the $L^{p}$-Caffarelli-Kohn-Nirenberg type inequality (see e.g. \cite{Costa-2008} and \cite{DJSJ-2013}) for $\mathbb{G}\equiv\mathbb{R}^{n}$ with the sharp constant:
\begin{equation}\label{CKN}
\frac{|n-\gamma|}{p}
\left\|\frac{f}{\|x\|^{\frac{\gamma}{p}}}\right\|^{p}_{L^{p}(\mathbb{R}^{n})}\leq
\left\|\frac{1}{\|x\|^{\alpha}}\nabla f\right\|_{L^{p}(\mathbb{R}^{n})}\left\|\frac{f}{\|x\|^{\frac{\beta}{p-1}}}\right\|^{p-1}_{L^{p}(\mathbb{R}^{n})},
\end{equation}
for all $f\in C_{0}^{\infty}(\mathbb{R}^{n}\backslash\{0\}),$ and $\|x\|=\sqrt{x_{1}^{2}+\ldots+x_{n}^{2}}.$
In the case
$$\beta=\gamma\left(1-\frac{1}{p}\right),$$
that is, taking $\beta=(\alpha+1)(p-1)$ and $\gamma=p(\alpha+1)$, the inequality
\eqref{eqLpCKN} implies that
\begin{equation}\label{Lpweighted}
\frac{|N-p(\alpha+1)|}{p}\left\|\frac{f}{|x'|^{\alpha+1}}\right\|_{L^{p}(\mathbb{G})}\leq
\left\|\frac{1}{|x'|^{\alpha}}\nabla_{H} f\right\|_{L^{p}(\mathbb{G})},\quad 1<p<\infty,
\end{equation}
for any $f\in C^{\infty}_{0}(\mathbb{G}\backslash\{x'=0\})$ and all $\alpha\in\mathbb{R}$.

When $\alpha=0$ and $1<p<N$, the inequality \eqref{Lpweighted} gives the following stratified group version of $L^{p}$-Hardy inequality
\begin{equation}\label{47-1}
\left\|\frac{1}{|x'|}f
\right\|_{L^{p}(\mathbb{G})}\leq
\frac{p}{N-p}\left\|\nabla_{H} f\right\|_{L^{p}(\mathbb{G})},\; 1<p<N,
\end{equation}
again with $\frac{p}{N-p}$ being the best constant
(see \cite{DAmbrosio-Difur04} and \cite{Yen16} for the version on the Heisenberg group).
In the abelian case ${\mathbb G}=(\mathbb R^{n},+)$, $n\geq 3$, \eqref{47-1} implies the classical Hardy inequality for $\mathbb{G}\equiv\mathbb{R}^{n}$:
\begin{equation*}\label{Hardy}
\left\|\frac{f}{\|x\|}\right\|_{L^{p}(\mathbb{R}^{n})}\leq
\frac{p}{n-p}\left\|\nabla f\right\|_{L^{p}(\mathbb{R}^{n})},
\end{equation*}
for all $f\in C_{0}^{\infty}(\mathbb{R}^{n}\backslash\{0\}),$ and $\|x\|=\sqrt{x_{1}^{2}+\ldots+x_{n}^{2}}.$

The inequality \eqref{47-1} implies the following Heisenberg-Pauli-Weyl type  uncertainly principle on stratified groups (see e.g. \cite{Ciatti-Cowling-Ricci}, \cite{Ruzhansky-Suragan:Hardy}, \cite{Ruzhansky-Suragan:Layers} and \cite{ORS16} for different settings):
For each $f\in C^{\infty}_{0}(\mathbb{G}\backslash\{x'=0\})$, using H\"older's inequality and \eqref{47-1}, we have
\begin{multline}
\left\|f\right\|^{2}_{L^{2}(\mathbb{G})}
\leq \left\|\frac{1}{|x'|} f\right\|_{L^{p}(\mathbb{G})}\left\||x'| f\right\|_{L^{\frac{p}{p-1}}(\mathbb{G})}
\\\leq\frac{p}{N-p}\left\|\nabla_{H} f\right\|_{L^{p}(\mathbb{G})}\left\||x'| f\right\|_{L^{\frac{p}{p-1}}(\mathbb{G})},\quad 1<p<N,
\end{multline}
that is,
\begin{equation}\label{1UP1p}
\left\|f\right\|^{2}_{L^{2}(\mathbb{G})}
\leq\frac{p}{N-p}\left\|\nabla_{H} f\right\|_{L^{p}(\mathbb{G})}\left\||x'| f\right\|_{L^{\frac{p}{p-1}}(\mathbb{G})},\quad 1<p<N.
\end{equation}

In the abelian case ${\mathbb G}=(\mathbb R^{n},+)$, taking
$N=n$, we obtain that \eqref{1UP1p} with $p=2$ implies
the classical
uncertainty principle for $\mathbb{G}\equiv\mathbb R^{n}$: for all $f\in C_{0}^{\infty}(\mathbb{R}^{n}\backslash \{0\}),$ we have
\begin{equation*}\label{UPRn}
\left(\int_{\mathbb R^{n}}
 |f(x)|^{2} dx\right)^{2}\leq\left(\frac{2}{n-2}\right)^{2}\int_{\mathbb R^{n}}|\nabla f(x)|^{2}dx
\int_{\mathbb R^{n}} \|x\|^{2} |f(x)|^{2}dx,
\end{equation*}
which is the Heisenberg-Pauli-Weyl uncertainly principle on $\mathbb R^{n}$.

On the other hand, directly from the inequality \eqref{eqLpCKN}, using the H\"older inequality, we can obtain
a number of Heisenberg-Pauli-Weyl type uncertainly inequalities which have
various consequences and applications.
For instance, when $\alpha p=\alpha+\beta+1$, we get
\begin{equation}\label{HPW1}
\frac{|N-\alpha p|}{p}\left\|\frac{f}{|x'|^{\alpha}}\right\|^{p}_{L^{p}(\mathbb{G})}
\leq\left\|\frac{\nabla_{H} f}{|x'|^{\alpha}}\right\|_{L^{p}(\mathbb{G})}\left\||x'|^{\frac{1}{p-1}-\alpha} f\right\|^{p-1}_{L^{p}(\mathbb{G})},
\end{equation}
and if $0=\alpha+\beta+1$ and $\alpha=-p$, then
\begin{equation}\label{HPW2}
\frac{N}{p}\left\|f\right\|^{p}_{L^{p}(\mathbb{G})}
\leq\left\||x'|^{p}\nabla_{H} f\right\|_{L^{p}(\mathbb{G})}\left\| \frac{f}{|x'|}\right\|^{p-1}_{L^{p}(\mathbb{G})},
\end{equation}
both with sharp constants.

\begin{proof}[Proof of Theorem \ref{LpCKN}]
We may assume that $\gamma\neq N$ since for $\gamma=N$ the inequality \eqref{eqLpCKN} is trivial. By using the identity \eqref{divgamma}, the divergence theorem and Schwarz's inequality one calculates
\begin{align*}
\int_{\mathbb{G}}
\frac{|f(x)|^{p}}
{|x'|^{\gamma}}dx
& =\frac{1}{N-\gamma}\int_{\mathbb{G}}|f(x)|^{p} {\rm div}_{H}\left(\frac{x'}{|x'|^{\gamma}}\right)dx
\\ & =-\frac{1}{N-\gamma}{\rm Re}\int_{\mathbb{G}} pf(x)|f(x)|^{p-2}\frac{\overline{x'\cdot\nabla_{H}f}}{|x'|^{\gamma}}dx
\\ &
\leq\left|\frac{p}{N-\gamma}\right| \int_{\mathbb{G}}
\frac{|f(x)|^{p-1}}{|x'|^{\gamma}} \left| x'\cdot\nabla_{H}f\right| dx
\\ & \leq\left|\frac{p}{N-\gamma}\right| \int_{\mathbb{G}}
\frac{|f(x)|^{p-1}}{|x'|^{\alpha+\beta}}
\left|\nabla_{H}f(x)\right|dx
\\ & \leq \left|\frac{p}{N-\gamma}\right| \left(\int_{\mathbb{G}}
\frac{\left|\nabla_{H}f(x)\right|^{p}}{|x'|^{\alpha p}}
dx\right)^{\frac{1}{p}} \left(\int_{\mathbb{G}}
\frac{|f(x)|^{p}}{|x'|^{\frac{\beta p}{p-1}}}
dx\right)^{\frac{p-1}{p}}.
\end{align*}
Here we have used H\"older's inequality in the last line. Thus, we arrive at
\begin{equation}\label{calcul}
\left|\frac{N-\gamma}{p}\right| \int_{\mathbb{G}}
\frac{|f(x)|^{p}}
{|x'|^{\gamma}}dx \leq \left(\int_{\mathbb{G}}
\frac{\left|\nabla_{H}f(x)\right|^{p}}{|x'|^{\alpha p}}
dx\right)^{\frac{1}{p}} \left(\int_{\mathbb{G}}
\frac{|f(x)|^{p}}{|x'|^{\frac{\beta p}{p-1}}}
dx\right)^{\frac{p-1}{p}}.
\end{equation}
This proves \eqref{eqLpCKN}.
Now it remains to show the sharpness of the constant. Let us examine the equality
condition in above H\"older's inequality as in the abelian case (see \cite{DJSJ-2013}).
For this we consider the function
\begin{equation}
g(x)=\left\{
\begin{array}{ll}
    e^{-\frac{C}{\lambda}|x'|^{\lambda}},\quad \lambda:=\alpha-\frac{\beta}{p-1}+1\neq0,\\
    \frac{1}{|x'|^{C}},\quad \alpha-\frac{\beta}{p-1}+1=0, \\
\end{array}
\right.
\label{EQ:fs}
\end{equation}
where $C=\left|\frac{N-\gamma}{p}\right|$ and $\gamma\neq N.$
Then it can be checked that
\begin{equation}
\left|\frac{p}{N-\gamma}\right|^{p}
\frac{\left|\nabla_{H}g(x)\right|^{p}}{|x'|^{\alpha p}}=
\frac{|g(x)|^{p}}{|x'|^{\frac{\beta p}{p-1}}},
\end{equation}
which satisfies the equality
condition in H\"older's inequality.
It shows that the constant $C=\left|\frac{N-\gamma}{p}\right|$ is sharp.
\end{proof}

\subsection{Badiale-Tarantello conjecture.} The proof of Theorem \ref{LpCKN} gives the following
similar statement in $\mathbb{R}^{n}.$
\begin{prop}\label{LpBT}
Let $x=(x',x'')\in\mathbb{R}^{N}\times\mathbb{R}^{n-N},\; 1\leq N\leq n,$ and $\alpha,\,\beta\in \mathbb{R}$.
Then for any $f\in C^{\infty}_{0}(\mathbb{R}^{n}\backslash\{x'=0\}),$ and all $1<p<\infty,$ we have
\begin{equation}\label{eqBT}
\frac{|N-\gamma|}{p}
\left\|\frac{f}{|x'|^{\frac{\gamma}{p}}}\right\|^{p}_{L^{p}(\mathbb{R}^{n})}\leq
\left\|\frac{1}{|x'|^{\alpha}} \nabla f\right\|_{L^{p}(\mathbb{R}^{n})}\left\|\frac{f}{|x'|^{\frac{\beta}{p-1}}}\right\|^{p-1}_{L^{p}(\mathbb{R}^{n})},
\end{equation}
where $\gamma=\alpha+\beta+1$ and $|x'|$ is the Euclidean norm on $\mathbb{R}^{N}$. If $\gamma\neq N$ then the constant $\frac{|N-\gamma|}{p}$ is sharp.
\end{prop}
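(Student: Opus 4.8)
The plan is to observe that the proof of Theorem \ref{LpCKN} never used anything specific to the stratified structure beyond the divergence identity \eqref{divgamma} and the divergence theorem, both of which hold verbatim in $\mathbb{R}^{n}$; so one simply repeats the computation with the full Euclidean gradient $\nabla$ in place of $\nabla_{H}$. Concretely, I would regard $x'/|x'|^{\gamma}$ as the vector field $(x'/|x'|^{\gamma},0)\in\mathbb{R}^{N}\times\mathbb{R}^{n-N}$; since its last $n-N$ components vanish, its Euclidean divergence is $\sum_{j=1}^{N}\partial_{x_{j}}(x'_{j}/|x'|^{\gamma})=(N-\gamma)/|x'|^{\gamma}$ for $|x'|\neq0$, which is exactly the Euclidean computation underlying \eqref{divgamma}. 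Assuming $\gamma\neq N$ (the case $\gamma=N$ being trivial), integration by parts against $|f|^{p}$ gives
\[
\int_{\mathbb{R}^{n}}\frac{|f|^{p}}{|x'|^{\gamma}}\,dx=-\frac{p}{N-\gamma}\,{\rm Re}\int_{\mathbb{R}^{n}}f|f|^{p-2}\,\frac{\overline{x'\cdot\nabla f}}{|x'|^{\gamma}}\,dx,
\]
where $x'\cdot\nabla f=\sum_{j=1}^{N}x'_{j}\partial_{x_{j}}f$, so that $|x'\cdot\nabla f|\le|x'|\,|\nabla f|$ by the Cauchy--Schwarz inequality.

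From here the argument is literally the one leading to \eqref{calcul}: using $\gamma=\alpha+\beta+1$ to rewrite $|x'|^{-\gamma}|x'\cdot\nabla f|\le|x'|^{-(\alpha+\beta)}|\nabla f|$ and applying H\"older's inequality with exponents $p$ and $p/(p-1)$ to the product $(|x'|^{-\alpha}|\nabla f|)(|x'|^{-\beta}|f|^{p-1})$ yields \eqref{eqBT}. Nothing else is needed, and the computation is valid for every $1\le N\le n$; for $N=n$ it is just the classical weighted Hardy/Caffarelli--Kohn--Nirenberg estimate.

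For the sharpness of $\frac{|N-\gamma|}{p}$ when $\gamma\neq N$, I would again take the function $g$ from \eqref{EQ:fs}, which depends only on $x'$. Using $|\nabla|x'|^{\lambda}|=\lambda|x'|^{\lambda-1}$, one checks that $\nabla g$ is a scalar multiple of $(x',0)$, so Cauchy--Schwarz is an equality for $g$, and that $g$ satisfies the pointwise identity $\big|\tfrac{p}{N-\gamma}\big|^{p}|x'|^{-\alpha p}|\nabla g|^{p}=|x'|^{-\beta p/(p-1)}|g|^{p}$, which is exactly the equality case of H\"older's inequality. The one point that goes beyond the proof of Theorem \ref{LpCKN} is that $g\notin C^{\infty}_{0}(\mathbb{R}^{n}\backslash\{x'=0\})$: besides the usual truncation $\chi_{\varepsilon}(x')$ near $|x'|=0$ and $|x'|=\infty$ (carried out as in the abelian references quoted after Theorem \ref{LpCKN}), one must cut off in the $x''$-variable by a factor $\psi(x''/R)$ with $\psi\in C^{\infty}_{0}(\mathbb{R}^{n-N})$. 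By the triangle inequality in $L^{p}$ together with the change of variables $x''\mapsto Rx''$, each ``principal'' integral in \eqref{eqBT} scales like $R^{n-N}$ while the contribution of the cross term $g\,\nabla(\psi(\cdot/R))$ carries an extra factor $R^{-p}$; hence, letting $R\to\infty$ and then $\varepsilon\to0$ through a diagonal sequence, the ratio of the right-hand to the left-hand side of \eqref{eqBT} tends to $1$, which gives optimality.

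Accordingly, the inequality \eqref{eqBT} itself is not the difficulty: it is a line-by-line transcription of the proof of Theorem \ref{LpCKN} with $\nabla$ replacing $\nabla_{H}$. The only genuinely new step, and hence the main obstacle, is making the sharpness argument rigorous, that is, controlling the error produced by the $x''$-cutoff when $N<n$; the scaling estimate indicated above is precisely what disposes of it.
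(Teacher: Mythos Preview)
Your argument is correct and follows essentially the same route as the paper: both proofs use the divergence identity $(N-\gamma)|x'|^{-\gamma}=\operatorname{div}(x'_{0}/|x'|^{\gamma})$ with $x'_{0}=(x',0)$, integrate by parts, bound $|x'_{0}\cdot\nabla f|\le |x'|\,|\nabla f|$ by Cauchy--Schwarz, and finish with H\"older; the only cosmetic difference is that the paper first writes the partial divergence $\operatorname{div}_{N}$ and the partial gradient $\nabla_{N}$ before passing to the full gradient via $x'_{0}$, whereas you pad the vector field with zeros from the start. For sharpness the paper simply exhibits the same extremiser $g$ and checks the pointwise H\"older equality, without discussing any cutoff; your treatment of the $x''$-truncation (with the $R^{n-N}$ versus $R^{n-N-p}$ scaling) is therefore more careful than what the paper actually writes, and the iterated limit $R\to\infty$ followed by $\varepsilon\to0$ (rather than a genuine diagonal) is what makes the cancellation of the $\|\psi\|_{L^{p}}$ factors transparent.
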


The proof is similar to the proof of Theorem \ref{LpCKN}. However, for the sake of completeness here we give the details.

\begin{proof}[Proof of Proposition \ref{LpBT}]
We may assume that $\gamma\neq N$ since for $\gamma=N$ the inequality \eqref{eqBT} is trivial. By using the identity
$${\rm div}_{N}\frac{x'}{|x'|^{\gamma}}=\frac{N-\gamma}{|x'|^{\gamma}},$$
for all $\gamma\in \mathbb{R}$ and $x'\in\mathbb{R}^{N}$ with $|x'|\neq 0$, where
 ${\rm div}_{N}$ is
the standard divergence on $\mathbb{R}^{N}$, the divergence theorem and Schwarz's inequality one calculates
\begin{align*}
\int_{\mathbb{R}^{n}}
\frac{|f(x)|^{p}}
{|x'|^{\gamma}}dx
& =\frac{1}{N-\gamma}\int_{\mathbb{R}^{n}}|f(x)|^{p} {\rm div}_{N}\left(\frac{x'}{|x'|^{\gamma}}\right)dx
\\ & =-\frac{1}{N-\gamma}{\rm Re}\int_{\mathbb{R}^{n}} pf(x)|f(x)|^{p-2}\frac{\overline{x'\cdot\nabla_{N}f}}{|x'|^{\gamma}}dx
\\ &
\leq\left|\frac{p}{N-\gamma}\right| \int_{\mathbb{R}^{n}}
\frac{|f(x)|^{p-1}}{|x'|^{\gamma}} \left| x'\cdot\nabla_{N}f\right| dx
\\ &
=\left|\frac{p}{N-\gamma}\right| \int_{\mathbb{R}^{n}}
\frac{|f(x)|^{p-1}}{|x'|^{\gamma}} \left| x'_{0}\cdot\nabla f\right| dx
\\ &
\leq\left|\frac{p}{N-\gamma}\right| \int_{\mathbb{R}^{n}}
\frac{|f(x)|^{p-1}}{|x'|^{\alpha+\beta}}
\left|\nabla f(x)\right|dx
\\ &
\leq \left|\frac{p}{N-\gamma}\right| \left(\int_{\mathbb{R}^{n}}
\frac{\left|\nabla f(x)\right|^{p}}{|x'|^{\alpha p}}
dx\right)^{\frac{1}{p}} \left(\int_{\mathbb{R}^{n}}
\frac{|f(x)|^{p}}{|x'|^{\frac{\beta p}{p-1}}}
dx\right)^{\frac{p-1}{p}},
\end{align*}
where $x'_{0}=(x',0)\in \mathbb{R}^{n}$, that is $|x'_{0}|=|x'|,$ $\nabla_{N}$ is the standard gradient on $\mathbb{R}^{N}$, as well as $\nabla$ is the gradient
on $\mathbb{R}^{n}.$
Here we have used H\"older's inequality in the last line. Thus, we arrive at
\begin{equation}\label{calcul}
\left|\frac{N-\gamma}{p}\right| \int_{\mathbb{R}^{n}}
\frac{|f(x)|^{p}}
{|x'|^{\gamma}}dx \leq \left(\int_{\mathbb{R}^{n}}
\frac{\left|\nabla f(x)\right|^{p}}{|x'|^{\alpha p}}
dx\right)^{\frac{1}{p}} \left(\int_{\mathbb{R}^{n}}
\frac{|f(x)|^{p}}{|x'|^{\frac{\beta p}{p-1}}}
dx\right)^{\frac{p-1}{p}}.
\end{equation}
This proves \eqref{eqBT}.
Now it remains to show the sharpness of the constant. Let us examine the equality
condition in above H\"older's inequality.
Consider
\begin{equation}
g(x)=\left\{
\begin{array}{ll}
    e^{-\frac{C}{\lambda}|x'|^{\lambda}},\quad \lambda:=\alpha-\frac{\beta}{p-1}+1\neq0,\\
    \frac{1}{|x'|^{C}},\quad \alpha-\frac{\beta}{p-1}+1=0, \\
\end{array}
\right.
\label{EQ:fs}
\end{equation}
where $C=\left|\frac{N-\gamma}{p}\right|$ and $\gamma\neq N.$
Then it can be checked that
\begin{equation}
\left|\frac{p}{N-\gamma}\right|^{p}
\frac{\left|\nabla g(x)\right|^{p}}{|x'|^{\alpha p}}=\left|\frac{p}{N-\gamma}\right|^{p}
\frac{\left|\nabla_{N}g(x)\right|^{p}}{|x'|^{\alpha p}}=
\frac{|g(x)|^{p}}{|x'|^{\frac{\beta p}{p-1}}},
\end{equation}
which satisfies the equality
condition in H\"older's inequality.
It shows that the constant $C=\left|\frac{N-\gamma}{p}\right|$ is sharp.
\end{proof}

As above, taking $\beta=(\alpha+1)(p-1)$ and $\gamma=p(\alpha+1)$ the inequality
\eqref{eqBT} implies that
\begin{equation}\label{LpweightedBT}
\frac{|N-p(\alpha+1)|}{p}\left\|\frac{f}{|x'|^{\alpha+1}}\right\|_{L^{p}(\mathbb{R}^{n})}\leq
\left\|\frac{1}{|x'|^{\alpha}}\nabla f\right\|_{L^{p}(\mathbb{R}^{n})},\; 1<p<\infty,
\end{equation}
for any $f\in C^{\infty}_{0}(\mathbb{R}^{n}\backslash\{x'=0\})$ and for all $\alpha\in\mathbb{R}$ with the sharp constant.
When $\alpha=0$ and $1<p<N,\; 2\leq N\leq n,$ the inequality \eqref{LpweightedBT} implies that
\begin{equation}
\left\|\frac{1}{|x'|}f
\right\|_{L^{p}(\mathbb{R}^{n})}\leq
\frac{p}{N-p}\left\|\nabla f\right\|_{L^{p}(\mathbb{R}^{n})},
\end{equation}
again with $\frac{p}{N-p}$ being the best constant. This proves the Badiale-Tarantello conjecture, which is stated in the introduction (see also \cite[Remark 2.3]{BadTar:ARMA-2002} for the original statement).

\subsection{Horizontal higher order versions}
In this subsection we show how by iterating the established $L^{p}$-Caffarelli-Kohn-Nirenberg type inequalities one can get inequalities of higher order.
Putting $|\nabla_{H} f|$ instead of $f$ and $\alpha-1$
instead of $\alpha$ in \eqref{Lpweighted} we consequently have

$$
\left\|\frac{\nabla_{H} f}{|x'|^{\alpha}}\right\|_{L^{p}(\mathbb{G})}\leq \frac{p}{|N-p\alpha|}
\left\|\frac{1}{|x'|^{\alpha-1}}\nabla_{H}^{2} f\right\|_{L^{p}(\mathbb{G})},
$$
for $\alpha\neq \frac{N}{p}.$ Here and after we understand $\nabla_{H}^{2} f=\nabla_{H}|\nabla_{H} f|$, that is, $\nabla_{H}^{m} f=\nabla_{H}|\nabla^{m-1}_{H} f|,\;m\in\mathbb{N}.$
Combining it with \eqref{Lpweighted} we get

\begin{equation}\label{iter1}
\left\|\frac{f}{|x'|^{\alpha+1}}\right\|_{L^{p}(\mathbb{G})}\leq \frac{p}{|N-p(\alpha+1)|}\frac{p}{|N-p\alpha)|}
\left\|\frac{1}{|x'|^{\alpha-1}}\nabla_{H}^{2} f\right\|_{L^{p}(\mathbb{G})},
\end{equation}
for each $\alpha\in\mathbb{R}$ such that $\alpha\neq \frac{N}{p}-1$ and $\alpha\neq \frac{N}{p}.$
This iteration process gives

\begin{equation}\label{Lph1}
\left\|\frac{f}{|x'|^{\theta+1}}\right\|_{L^{p}(\mathbb{G})}\leq A_{\theta,k}
\left\|\frac{1}{|x'|^{\theta+1-k}}\nabla_{H}^{k} f\right\|_{L^{p}(\mathbb{G})},\; 1<p<\infty,
\end{equation}
for any $f\in C^{\infty}_{0}(\mathbb{G}\backslash\{x'=0\})$ and all $\theta\in\mathbb{R}$ such that $\prod_{j=0}^{k-1}
\left|N-p(\theta+1-j)\right|\neq0,$ and
$$A_{\theta,k}:=p^{k}\left[\prod_{j=0}^{k-1}
\left|N-p(\theta+1-j)\right|\right]^{-1}.$$
Similarly, we have
\begin{equation}\label{Lph2}
\left\|\frac{\nabla_{H}f}{|x'|^{\vartheta+1}}\right\|_{L^{p}(\mathbb{G})}\leq A_{\vartheta,m}
\left\|\frac{1}{|x'|^{\vartheta+1-m}}\nabla_{H}^{m+1} f\right\|_{L^{p}(\mathbb{G})},\; 1<p<\infty,
\end{equation}
for any $f\in C^{\infty}_{0}(\mathbb{G}\backslash\{x'=0\})$ and all $\vartheta\in\mathbb{R}$ such that $\prod_{j=0}^{m-1}
\left|N-p(\vartheta+1-j)\right|\neq0,$ and
$$A_{\vartheta,m}:=p^{m}\left[\prod_{j=0}^{m-1}
\left|N-p(\vartheta+1-j)\right|\right]^{-1}.$$
Now putting $\vartheta+1=\alpha$ and $\theta+1=\frac{\beta}{p-1}$  into \eqref{Lph2} and \eqref{Lph1}, respectively, from \eqref{eqLpCKN} we obtain

\begin{prop}
Let $1<p<\infty.$
For any $k,m\in \mathbb{N}$ we have
\begin{equation}\label{Lphighorder}
\frac{|N-\gamma|}{p}
\left\|\frac{f}{|x'|^{\frac{\gamma}{p}}}\right\|^{p}_{L^{p}
(\mathbb{G})}\leq
\widetilde{A}_{\alpha,m}\widetilde{A}_{\beta,k}\left\|\frac{1}{|x'|^{\alpha-m}}\nabla_{H}^{m+1}
f\right\|_{L^{p}(\mathbb{G})}\left\|\frac{1}{|x'|^{\frac{\beta}{p-1}-k}}\nabla_{H}^{k}f\right\|^{p-1}_{L^{p}(\mathbb{G})},
\end{equation}
for any $f\in C^{\infty}_{0}(\mathbb{G}\backslash\{x'=0\})$,  $\gamma=\alpha+\beta+1$, and $\alpha\in\mathbb{R}$ such that $\prod_{j=0}^{m-1}
\left|N-p(\alpha-j)\right|\neq0,$ and
$$\widetilde{A}_{\alpha,m}:=p^{m}\left[\prod_{j=0}^{m-1}
\left|N-p(\alpha-j)\right|\right]^{-1},$$
as well as $\beta\in\mathbb{R}$ such that $\prod_{j=0}^{k-1}
\left|N-p(\frac{\beta}{p-1}-j)\right|\neq0,$ and
$$\widetilde{A}_{\beta,k}:=p^{k(p-1)}\left[\prod_{j=0}^{k-1}
\left|N-p\left(\frac{\beta}{p-1}-j\right)\right|\right]^{-(p-1)}.$$
\end{prop}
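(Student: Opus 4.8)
The plan is to combine the basic $L^p$-Caffarelli–Kohn–Nirenberg inequality \eqref{eqLpCKN} with the two higher-order iterated estimates \eqref{Lph1} and \eqref{Lph2}, which have themselves already been derived by iterating the weighted Hardy inequality \eqref{Lpweighted}. The whole argument is purely a matter of substitution and tracking constants, so I expect no genuine obstacle; the only care needed is bookkeeping of the exponents and of the non-degeneracy conditions on $\alpha$ and $\beta$.

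First I would start from \eqref{eqLpCKN} with parameters $\alpha,\beta\in\mathbb{R}$ and $\gamma=\alpha+\beta+1$, written in the form
\begin{equation*}
\frac{|N-\gamma|}{p}\left\|\frac{f}{|x'|^{\gamma/p}}\right\|^{p}_{L^{p}(\mathbb{G})}\leq
\left\|\frac{\nabla_{H}f}{|x'|^{\alpha}}\right\|_{L^{p}(\mathbb{G})}\left\|\frac{f}{|x'|^{\beta/(p-1)}}\right\|^{p-1}_{L^{p}(\mathbb{G})}.
\end{equation*}
The right-hand side has two factors, and I would bound each one by a higher-order expression. For the first factor, apply \eqref{Lph2} with the choice $\vartheta+1=\alpha$: this requires $\prod_{j=0}^{m-1}|N-p(\alpha-j)|\neq0$ and produces
$$\left\|\frac{\nabla_{H}f}{|x'|^{\alpha}}\right\|_{L^{p}(\mathbb{G})}\leq \widetilde{A}_{\alpha,m}\left\|\frac{1}{|x'|^{\alpha-m}}\nabla_{H}^{m+1}f\right\|_{L^{p}(\mathbb{G})},$$
with $\widetilde{A}_{\alpha,m}=p^{m}\big[\prod_{j=0}^{m-1}|N-p(\alpha-j)|\big]^{-1}=A_{\vartheta,m}$. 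For the second factor, apply \eqref{Lph1} with $\theta+1=\beta/(p-1)$, which requires $\prod_{j=0}^{k-1}|N-p(\beta/(p-1)-j)|\neq0$ and gives
$$\left\|\frac{f}{|x'|^{\beta/(p-1)}}\right\|_{L^{p}(\mathbb{G})}\leq A_{\theta,k}\left\|\frac{1}{|x'|^{\beta/(p-1)-k}}\nabla_{H}^{k}f\right\|_{L^{p}(\mathbb{G})};$$
raising this to the power $p-1$ turns the constant into $A_{\theta,k}^{p-1}=p^{k(p-1)}\big[\prod_{j=0}^{k-1}|N-p(\beta/(p-1)-j)|\big]^{-(p-1)}=\widetilde{A}_{\beta,k}$.

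Substituting these two bounds back into \eqref{eqLpCKN} yields exactly \eqref{Lphighorder}, with the combined constant $\widetilde{A}_{\alpha,m}\widetilde{A}_{\beta,k}$ and the stated non-degeneracy hypotheses on $\alpha$ and $\beta$. The only point worth double-checking is the exponent on $|x'|$ inside the $\nabla_H^{m+1}$ term, namely $\alpha-m$ (coming from $\vartheta+1-m=\alpha-m$) and inside the $\nabla_H^k$ term, namely $\beta/(p-1)-k$ (coming from $\theta+1-k$); these match the claimed statement. Since $f\in C^\infty_0(\mathbb{G}\setminus\{x'=0\})$ is preserved under taking $|\nabla_H^{j}f|$ at each stage of the iteration leading to \eqref{Lph1}–\eqref{Lph2}, no additional regularity hypothesis is needed, and the proof is complete.
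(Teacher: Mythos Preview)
Your proposal is correct and follows exactly the same approach as the paper: start from \eqref{eqLpCKN}, then apply the iterated estimates \eqref{Lph2} and \eqref{Lph1} with the substitutions $\vartheta+1=\alpha$ and $\theta+1=\beta/(p-1)$ to bound the two factors on the right-hand side. The paper states this in a single sentence; you have simply filled in the bookkeeping of the constants and exponents, all of which is accurate.
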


\section{Horizontal critical Hardy type inequality}
\label{Sec4}

For $p=N$ the inequality \eqref{47-1} fails for any constant (see, e.g., \cite{ET99} and \cite{IIO} for discussions in Euclidean cases). However, we state the following theorem for the (critical) case $p=N$.

\begin{thm} \label{critcase}
For a bounded domain $\Omega\subset \mathbb{G}$ with $0\in\Omega$ and all $f\in C^{\infty}_{0}(\Omega\backslash\{x'=0\})$ we have
\begin{equation}\label{eqcritcase}
\left\|\frac{f(x)}{|x'|{\rm log}\frac{R}{|x'|}}
\right\|_{L^{N}(\Omega)}\leq  \frac{N}{N-1}\left\|\frac{x'}{|x'|}\cdot\nabla_{H} f(x)\right\|_{L^{N}(\Omega)},\; 1<N<\infty,
\end{equation}
where $R=\underset{x\in\Omega}{\sup}|x'|.$
\end{thm}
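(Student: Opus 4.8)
The plan is to mimic the divergence-theorem argument used in the proof of Theorem~\ref{LpCKN}, but with the vector field $\frac{x'}{|x'|^{N}}$ replaced by a logarithmically corrected field adapted to the critical exponent $p=N$. Specifically, I would start from the identity
\begin{equation*}
{\rm div}_{H}\left(\frac{x'}{|x'|^{N}\left({\rm log}\frac{R}{|x'|}\right)^{N-1}}\right)
=\frac{(N-1)}{|x'|^{N}\left({\rm log}\frac{R}{|x'|}\right)^{N}},
\end{equation*}
which follows from \eqref{divgamma} with $\gamma=N$ (so the $N-\gamma$ term vanishes) together with the chain rule applied to the logarithmic factor and the identity $\nabla_{H}|x'|=\frac{x'}{|x'|}$ coming from \eqref{gradgamma}. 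The crucial point is that the ${\rm div}_{H}$ of the unweighted field $\frac{x'}{|x'|^{N}}$ is zero, so only the derivative hitting $\left({\rm log}\frac{R}{|x'|}\right)^{-(N-1)}$ survives, and that derivative produces exactly the critical weight $|x'|^{-N}\left({\rm log}\frac{R}{|x'|}\right)^{-N}$ with the constant $N-1$.

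Next I would apply this identity inside $\int_{\Omega}\frac{|f(x)|^{N}}{|x'|^{N}\left({\rm log}\frac{R}{|x'|}\right)^{N}}\,dx$, integrate by parts (the boundary terms vanish since $f\in C^{\infty}_{0}(\Omega\backslash\{x'=0\})$ and $\Omega$ is bounded with $R=\sup_{x\in\Omega}|x'|$, so ${\rm log}\frac{R}{|x'|}\geq 0$ and the singular set $\{x'=0\}$ is excluded from the support), and differentiate $|f|^{N}$ to get a factor $N|f|^{N-2}f\,\overline{x'\cdot\nabla_{H}f}$. Taking real parts and estimating $|x'\cdot\nabla_{H}f|=|x'|\,\bigl|\frac{x'}{|x'|}\cdot\nabla_{H}f\bigr|$, one arrives at
\begin{equation*}
(N-1)\int_{\Omega}\frac{|f|^{N}}{|x'|^{N}\left({\rm log}\frac{R}{|x'|}\right)^{N}}\,dx
\leq N\int_{\Omega}\frac{|f|^{N-1}}{|x'|^{N-1}\left({\rm log}\frac{R}{|x'|}\right)^{N-1}}\left|\frac{x'}{|x'|}\cdot\nabla_{H}f\right|\,dx.
\end{equation*}
Now I would apply H\"older's inequality with exponents $\frac{N}{N-1}$ and $N$, pairing $\frac{|f|^{N-1}}{|x'|^{N-1}\left({\rm log}\frac{R}{|x'|}\right)^{N-1}}$ against $\left|\frac{x'}{|x'|}\cdot\nabla_{H}f\right|$; the first factor raised to the power $\frac{N}{N-1}$ reproduces the left-hand integral, and dividing through by the common power $\left(\int_{\Omega}\frac{|f|^{N}}{|x'|^{N}\left({\rm log}\frac{R}{|x'|}\right)^{N}}\,dx\right)^{\frac{N-1}{N}}$ yields \eqref{eqcritcase} with the constant $\frac{N}{N-1}$.

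The main technical obstacle is justifying the integration by parts and confirming the boundary terms vanish: one must check that the singularity of the weight at $\{x'=0\}$ and the logarithmic singularity where $|x'|\to R$ do not contribute, which is handled by the compact support of $f$ away from $\{x'=0\}$ and by noting ${\rm log}\frac{R}{|x'|}\to+\infty$ only as $|x'|\to 0$ (excluded), while near $|x'|=R$ the factor ${\rm log}\frac{R}{|x'|}$ vanishes but $f$ is compactly supported inside $\Omega$ so the region $|x'|$ close to $R$ may not even meet ${\rm supp}\,f$; in any case an approximation argument cutting off near $\{x'=0\}$ suffices. A secondary point to verify carefully is the divergence identity itself, since the horizontal divergence acts through the vector fields $X_{k}$ of \eqref{Xk0}, but because the field depends only on $x'$ the computation reduces to the Euclidean one on $\mathbb{R}^{N}$ exactly as in \eqref{divgamma}. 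I do not expect the constant $\frac{N}{N-1}$ to be claimed sharp here (the statement does not assert sharpness), so no extremizer analysis is needed.
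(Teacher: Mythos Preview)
Your argument is correct. The divergence identity you state is right (the $N-\gamma$ term from \eqref{divgamma} vanishes at $\gamma=N$, leaving only the logarithmic contribution), and once you observe that on the compact set ${\rm supp}\,f\subset\Omega\backslash\{x'=0\}$ one has $0<\delta\leq |x'|\leq R'<R$ (the upper bound holding because $\Omega$ is open, so $|x'|<R$ everywhere in $\Omega$ and hence strictly on any compact subset), all weights are bounded on the support and the integration by parts needs no further regularisation. The H\"older step then closes exactly as you describe.

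The paper proceeds differently: it first proves a more general result (Theorem~\ref{critcaseMAIN}) depending on an auxiliary $C^{2}$ function $g$ with $g'<0$, $g''>0$ and a boundedness condition on $(-g')^{2(N-1)}/(g'')^{N-1}$, and it works with an $\epsilon$-regularised potential $G_{\epsilon}=g\bigl(\log\tfrac{R_{\epsilon}e}{\sqrt{|x'|^{2}+\epsilon^{2}}}\bigr)$, computing $\mathcal{L}_{N}G_{\epsilon}$, applying the divergence theorem, dropping a nonnegative remainder, and only then letting $\epsilon\to 0$. Theorem~\ref{critcase} is recovered by choosing $g(t)=-\log(t-1)$; with that choice the vector field $|\nabla_{H}G_{\epsilon}|^{N-2}\nabla_{H}G_{\epsilon}$ converges precisely to your field $x'/\bigl(|x'|^{N}(\log\tfrac{R}{|x'|})^{N-1}\bigr)$, so at the core the two arguments use the same integration-by-parts identity. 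What the paper's detour buys is a one-parameter family of inequalities: a different choice $g(t)=e^{Nt/(1-N)}$ yields the $L^{N}$-Poincar\'e inequality in the remark following Theorem~\ref{critcaseMAIN}, and the $\epsilon$-regularisation makes the limiting procedure uniform across all admissible $g$. Your approach is shorter and fully adequate for the specific statement \eqref{eqcritcase}, but it does not immediately deliver that extra family.
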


Note that below we give the proof of \eqref{eqcritcase} for real-valued functions, the same inequality follows for all complex-valued functions by using the identity (cf. Davies \cite[p. 176]{Davies-bk:Semigroups-1980})
\begin{equation}\label{EQ:Davies-rc}
\forall z\in\mathbb C:\;
|z|^{p}=\left(\int_{-\pi}^{\pi}|\cos\theta|^{p} d\theta\right)^{-1}
\int_{-\pi}^{\pi}\left| {\rm Re}(z)\cos\theta+{\rm Im}(z)\sin\theta\right|^{p}d\theta,
\end{equation}
which follows from the representation $z=r(\cos\phi+i\sin\phi)$ by some manipulations.
To prove \eqref{critcase} we follow the Euclidean setting from \cite{Tak15}. First let us prove the following more abstract theorem, and then the proof of Theorem \ref{critcase} will follow easily from this.
\begin{thm} \label{critcaseMAIN}
Let $0\in\Omega\subset \mathbb{G}$ be a bounded domain.
Let $g:(1,\infty)\rightarrow \mathbb{R}$ be a $C^{2}$-function such that
\begin{equation}\label{lem1}
g^{\prime}(t)<0,\quad g^{\prime\prime}(t)>0
\end{equation}
for all $t>1$ and
\begin{equation}\label{lem2}
\frac{(-g^{\prime}(t))^{2(N-1)}}{(g^{\prime\prime}(t))^{N-1}}\leq C<\infty,\quad \forall t>1.
\end{equation}
Then we have
\begin{multline}\label{lem3}
\left(\frac{N-1}{N}\right)^{N}\int_{\Omega}\frac{|f(x)|^{N}}{|x'|^{N}}
\left(-g^{\prime}\left( {\rm log}\frac{Re}{|x'|}\right)\right)^{N-2}g^{\prime\prime}\left( {\rm log}\frac{Re}{|x'|}\right)dx
\\
\leq \int_{\Omega} \frac{\left(-g^{\prime}\left({\rm log}\frac{Re}{|x'|}\right)\right)^{2(N-1)}}{\left(g^{\prime\prime}\left({\rm log}\frac{Re}{|x'|}\right)\right)^{N-1}}\left|\frac{x'}{|x'|}\cdot\nabla_{H} f(x)\right|^{N}dx,
\end{multline}
for all $f\in C^{\infty}_{0}(\Omega\backslash\{x'=0\}).$
Here $R=\underset{x\in\Omega}{\sup}|x'|.$
\end{thm}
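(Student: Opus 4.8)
The plan is to mimic the Euclidean argument of \cite{Tak15} in the stratified setting, replacing the polar-coordinate computation by the horizontal divergence identities \eqref{gradgamma}--\eqref{divgamma} that are available in Section \ref{SEC:prelim}. The key observation is that the quantity $\frac{x'}{|x'|}\cdot\nabla_{H}$ behaves exactly like the radial derivative in the $x'$-variable: indeed from \eqref{gradgamma} we have $\nabla_{H}|x'| = x'/|x'|$, hence for any $C^{1}$ function $h$ of $|x'|$ we get $\frac{x'}{|x'|}\cdot\nabla_{H}\big(h(|x'|)\big) = h'(|x'|)$, and more generally $\frac{x'}{|x'|}\cdot\nabla_{H}\big(G(\log\frac{Re}{|x'|})\big) = -\frac{1}{|x'|}G'(\log\frac{Re}{|x'|})$ by the chain rule. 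This is the only structural fact about $\mathbb{G}$ we need; everything else is a one-variable computation dressed up in the integral over $\mathbb{G}$.

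First I would introduce the vector field $V(x) := \dfrac{x'}{|x'|^{N}}\left(-g'\!\left(\log\frac{Re}{|x'|}\right)\right)^{N-1}$ and compute $\mathrm{div}_{H}V$ using \eqref{divgamma} together with the chain-rule identity above; since $t\mapsto -g'(t)>0$ and $(-g')' = -g''<0$, one finds
\begin{equation*}
\mathrm{div}_{H}V(x) = \frac{1}{|x'|^{N}}\left(-g'\big(\log\tfrac{Re}{|x'|}\big)\right)^{N-2}g''\big(\log\tfrac{Re}{|x'|}\big)\cdot(N-1) + (\text{a term that vanishes}),
\end{equation*}
the cancellation being arranged precisely by the choice of the exponent $N-1$ and the factor $|x'|^{-N}$ (this is the analogue of $\mathrm{div}(x/|x|^{N}) = 0$ away from the origin, reflected in \eqref{divgamma} with $\gamma = N$). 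Then I would multiply by $|f(x)|^{N}$, integrate over $\Omega$, and apply the divergence theorem (valid here since the $X_k$ satisfy it, as recalled after \eqref{Xk0}, and $f$ has compact support in $\Omega\setminus\{x'=0\}$), moving the derivative onto $|f|^{N}$: this produces
\begin{equation*}
(N-1)\int_{\Omega}\frac{|f|^{N}}{|x'|^{N}}(-g')^{N-2}g''\,dx = -N\,\mathrm{Re}\int_{\Omega} |f|^{N-2}f\,\big(V\cdot\overline{\nabla_{H}f}\big)\,dx,
\end{equation*}
with the $g'$-arguments all equal to $\log\frac{Re}{|x'|}$. Bounding the right-hand side by $N\int_{\Omega}|f|^{N-1}\,\frac{(-g')^{N-1}}{|x'|^{N-1}}\,\big|\tfrac{x'}{|x'|}\cdot\nabla_{H}f\big|\,dx$ (Cauchy--Schwarz to pass from $V\cdot\nabla_{H}f$ to the component along $x'/|x'|$, exactly as in the proof of Theorem \ref{LpCKN}), and then applying Hölder's inequality with exponents $\frac{N}{N-1}$ and $N$, I split off a factor $\big(\int \frac{|f|^{N}}{|x'|^{N}}(-g')^{N-2}g''\big)^{(N-1)/N}$ against $\big(\int \frac{(-g')^{2(N-1)}}{(g'')^{N-1}}\big|\tfrac{x'}{|x'|}\cdot\nabla_{H}f\big|^{N}\big)^{1/N}$ --- here the weight is split as $(-g')^{N-1} = \big((-g')^{N-2}g''\big)^{(N-1)/N}\cdot\big((-g')^{2(N-1)}/(g'')^{N-1}\big)^{1/N}$, which is the algebraic identity that makes the whole scheme work. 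Dividing through by the common factor (finite thanks to \eqref{lem1}, so the integrand is positive, and legitimate to cancel) and raising to the power $N$ yields \eqref{lem3}.

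The main obstacle I anticipate is purely bookkeeping rather than conceptual: verifying that the ``vanishing term'' in the computation of $\mathrm{div}_{H}V$ really does cancel, i.e. organizing the chain rule for $t = \log\frac{Re}{|x'|}$ so that the contribution $\mathrm{div}_{H}(x'/|x'|^{N}) = 0$ from \eqref{divgamma} and the contribution from differentiating $(-g')^{N-1}$ combine cleanly with no leftover. One must be a little careful that $f\in C^{\infty}_{0}(\Omega\setminus\{x'=0\})$ keeps all integrands locally integrable (the singularity at $x'=0$ is excluded, and $g, g', g''$ are evaluated at arguments $>1$ since $|x'|<R<Re$ on $\Omega$), and that hypothesis \eqref{lem2} is only needed at the very end to guarantee the right-hand integral of \eqref{lem3} is the one we want to keep --- it does not enter the derivation itself. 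No compactness or approximation subtleties arise because $f$ is smooth and compactly supported away from the singular set, so the divergence theorem applies directly.
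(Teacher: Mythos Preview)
Your proposal is correct and follows essentially the same route as the paper: both arguments compute the horizontal divergence of the vector field $V=|\nabla_{H}G|^{N-2}\nabla_{H}G$ with $G(x)=g(\log\frac{Re}{|x'|})$, integrate by parts against $|f|^{N}$, and close with the H\"older splitting $(-g')^{N-1}=\big((-g')^{N-2}g''\big)^{(N-1)/N}\big((-g')^{2(N-1)}/(g'')^{N-1}\big)^{1/N}$. The only difference is that the paper carries an $\epsilon$-regularisation $|x'|\rightsquigarrow\sqrt{|x'|^{2}+\epsilon^{2}}$ throughout and lets $\epsilon\to0$ at the end (dropping an extra nonnegative term that the regularisation produces), whereas you work directly at $\epsilon=0$, which is legitimate since $f\in C^{\infty}_{0}(\Omega\setminus\{x'=0\})$; note also that no Cauchy--Schwarz is needed to extract the radial component, since $V$ is already parallel to $x'/|x'|$.
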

\begin{proof}[Proof of Theorem \ref{critcase}]
If we take
$$g(t)=-{\rm log}(t-1),$$
for $t>1,$ then we see that this function satisfies all assumptions of Theorem \ref{critcaseMAIN}.
That is,

$$g^{\prime}(t)=-\frac{1}{t-1}<0,\quad g^{\prime\prime}(t)=\frac{1}{(t-1)^{2}}>0,$$
and
$$\frac{(-g^{\prime}(t))^{2(N-1)}}{(g^{\prime\prime}(t))^{N-1}}=1,\quad \forall t>1.$$
Therefore, putting
$$g^{\prime}\left({\rm log}\frac{Re}{|x'|}\right)=-\frac{1}{{\rm log}\frac{R}{|x'|}}$$
and
$$ g^{\prime\prime}\left({\rm log}\frac{Re}{|x'|}\right)=\frac{1}{\left({\rm log}\frac{R}{|x'|}\right)^{2}}$$
in \eqref{lem3} we obtain \eqref{eqcritcase}.
\end{proof}

\begin{rem}
Taking $$g(t)=e^{\frac{Nt}{1-N}},\;t>1,$$ in \eqref{lem3} we obtain
$$\|f\|_{L^{N}(\Omega)}\leq\left\||x'|
\frac{x'}{|x'|}\cdot\nabla_{H}f\right\|_{L^{N}(\Omega)}.$$
Since $R=\underset{x\in\Omega}{\sup}|x'|$ using Schwarz's inequality
we get
$$\|f\|_{L^{N}(\Omega)}\leq R\left\|\nabla_{H}f\right\|_{L^{N}(\Omega)},$$
which is $L^{N}$-Poincare inequality for the horizontal gradient.
\end{rem}

\begin{proof}[Proof of Theorem \ref{critcaseMAIN}]
Let us introduce notations
$$R_{\epsilon}:=\underset{x\in\Omega}{\sup}\sqrt{|x'|^{2}+2\epsilon^{2}},$$$$ F_{\epsilon}(x):={\rm log}\frac{R_{\epsilon}e}{\sqrt{|x'|^{2}+\epsilon^{2}}},$$
and
$$G_{\epsilon}(x)=g(F_{\epsilon}(x)),$$
for, say, $\epsilon>0.$
Then a direct calculation shows
$$|\nabla_{H}G_{\epsilon}(x)|^{N-2}\nabla_{H}G_{\epsilon}(x)=\left(-g^{\prime}(F_{\epsilon}(x))\right)^{N-1}
\left( \frac{|x'|^{N-2}x'}{(|x'|^{2}+\epsilon^{2})^{N-1}}\right)
$$
and since $g^{\prime}(t)<0,$ with $\mathcal{L}_{N}$ as in \eqref{pLap},
$$
\mathcal{L}_{N}G_{\epsilon}(x)={\rm div}_{H}(|\nabla_{H}G_{\epsilon}(x)|^{N-2}\nabla_{H}G_{\epsilon}(x))
$$
$$
=(N-1)\left(-g^{\prime}(F_{\epsilon}(x))\right)^{N-2}
g^{\prime\prime}(F_{\epsilon}(x)) \frac{|x'|^{N}}{(|x'|^{2}+\epsilon^{2})^{N}}
$$
$$
+(N-1)\left(-g^{\prime}(F_{\epsilon}(x))\right)^{N-1}
 \frac{2\epsilon^{2}|x'|^{N-2}}{(|x'|^{2}+\epsilon^{2})^{N}}.
$$
The divergence theorem gives
\begin{multline}\label{diveq}
\int_{\Omega}|f|^{N}\mathcal{L}_{N}G_{\epsilon}(x)dx=\int_{\Omega}|f|^{N}
{\rm div}_{H}(|\nabla_{H}G_{\epsilon}(x)|^{N-2}\nabla_{H}G_{\epsilon}(x))dx
\\ =-\int_{\Omega} \nabla_{H}|f|^{N}\cdot (|\nabla_{H}G_{\epsilon}(x)|^{N-2}\nabla_{H}G_{\epsilon}(x))dx.
\end{multline}
Now on the one hand,

\begin{multline}\label{diveqLHS}
\int_{\Omega}|f|^{N}\mathcal{L}_{N}G_{\epsilon}(x)dx
\\
=(N-1)\int_{\Omega}|f|^{N}\left(-g^{\prime}(F_{\epsilon}(x))\right)^{N-2}
g^{\prime\prime}(F_{\epsilon}(x)) \frac{|x'|^{N}}{(|x'|^{2}+\epsilon^{2})^{N}}dx
\\
+(N-1)\int_{\Omega}|f|^{N}\left(-g^{\prime}(F_{\epsilon}(x))\right)^{N-1}
 \frac{2\epsilon^{2}|x'|^{N-2}}{(|x'|^{2}+\epsilon^{2})^{N}}dx
\\ \geq (N-1)\int_{\Omega}|f|^{N}\left(-g^{\prime}(F_{\epsilon}(x))\right)^{N-2}
g^{\prime\prime}(F_{\epsilon}(x)) \frac{|x'|^{N}}{(|x'|^{2}+\epsilon^{2})^{N}}dx.
\end{multline}
On the other hand, using
\begin{multline}\label{diveqRHS}
\left|-\int_{\Omega} \nabla_{H}|f|^{N}\cdot(|\nabla_{H}G_{\epsilon}(x)|^{N-2}\nabla_{H}G_{\epsilon}(x)) dx\right|
\\= \left|N\int_{\Omega}f|f|^{N-2} \nabla_{H}f\cdot (|\nabla_{H}F_{\epsilon}(x)|^{N-2}\nabla_{H}F_{\epsilon}(x)) dx\right|
\\=\left|N\int_{\Omega}|f(x)|^{N-2}f(x)\left(-g^{\prime}(F_{\epsilon}(x))\right)^{N-1}
\left( \frac{|x'|^{N-2} x'\cdot\nabla_{H}f}{(|x'|^{2}+\epsilon^{2})^{N-1}}\right) dx\right|
\\
=N\int_{\Omega}|f(x)|^{N-1}\left(-g^{\prime}(F_{\epsilon}(x))\right)^{N-1}
\left( \frac{|x'|^{N-2}\left| x'\cdot\nabla_{H}f\right|}{(|x'|^{2}+\epsilon^{2})^{N-1}}\right) dx
\\
\leq N\left(
\int_{\Omega}\left(-g^{\prime}(F_{\epsilon}(x))\right)^{N-2}
g^{\prime\prime}(F_{\epsilon}(x))\frac{|x'|^{N}|f(x)|^{N}}{(|x'|^{2}
+\epsilon^{2})^{N}}dx\right)^{\frac{N-1}{N}}
\\ \left(\int_{\Omega}\left(-g^{\prime}(F_{\epsilon}(x))\right)^{2(N-1)}
\left(g^{\prime\prime}(F_{\epsilon}(x))\right)^{-(N-1)}
 \left|\frac{x'}{|x'|}\cdot\nabla_{H}f
 \right|^{N} dx\right)^{\frac{1}{N}}.
\end{multline}
Combining all \eqref{diveq}, \eqref{diveqLHS} and \eqref{diveqRHS} we arrive at
\begin{multline}
(N-1)\int_{\Omega}|f|^{N}\left(-g^{\prime}(F_{\epsilon}(x))\right)^{N-2}
g^{\prime\prime}(F_{\epsilon}(x)) \frac{|x'|^{N}}{(|x'|^{2}+\epsilon^{2})^{N}}dx
\\
\leq N\left(
\int_{\Omega}\left(-g^{\prime}(F_{\epsilon}(x))\right)^{N-2}
g^{\prime\prime}(F_{\epsilon}(x))\frac{|x'|^{N}|f(x)|^{N}}{(|x'|^{2}
+\epsilon^{2})^{N}}dx\right)^{\frac{N-1}{N}}
\\ \left(\int_{\Omega}\left(-g^{\prime}(F_{\epsilon}(x))\right)^{2(N-1)}
\left(g^{\prime\prime}(F_{\epsilon}(x))\right)^{-(N-1)}
 \left| \frac{x'}{|x'|}\cdot\nabla_{H}f\right|^{N} dx\right)^{\frac{1}{N}},
\end{multline}
that is,
\begin{multline}
\left(\frac{N-1}{N}\right)^{N}\int_{\Omega}|f|^{N}\left(-g^{\prime}(F_{\epsilon}(x))\right)^{N-2}
g^{\prime\prime}(F_{\epsilon}(x)) \frac{|x'|^{N}}{(|x'|^{2}+\epsilon^{2})^{N}}dx.
\\
\leq \int_{\Omega}\left(-g^{\prime}(F_{\epsilon}(x))\right)^{2(N-1)}
\left(g^{\prime\prime}(F_{\epsilon}(x))\right)^{-(N-1)}
 \left|\frac{x'}{|x'|}\cdot\nabla_{H}f\right|^{N} dx,
\end{multline}
Now letting $\epsilon\rightarrow 0$ we obtain \eqref{lem3}.
\end{proof}

\section{Horizontal Hardy-Rellich type inequalities and weighted versions}
\label{Sec5}
\subsection{Hardy-Rellich type inequalities} We prove the following Hardy-Rellich type inequalities on the stratified group $\mathbb{G}$:

\begin{thm}\label{LpHR}
Let $\mathbb{G}$ be a stratified group
with $N$ being the dimension of the first stratum, and let $1<p<N$ with $\frac{1}{p}+\frac{1}{q}=1$ and $\alpha,\,\beta\in \mathbb{R}$ be such that $$\frac{p-N}{p-1}\leq\gamma:=\alpha+\beta+1\leq 0.$$
Then for all $f\in C^{\infty}_{0}(\mathbb{G}\backslash\{x'=0\})$ we have
\begin{equation}\label{eqLpHR}
\frac{N+\gamma(p-1)-p}{p}
\left\|\frac{\nabla_{H}f}{|x'|^{\frac{\gamma}{p}}}\right\|^{p}_{L^{p}(\mathbb{G})}\leq
\left\|\frac{1}{|x'|^{\alpha}} \mathcal{L}_{p} f\right\|_{L^{p}(\mathbb{G})}\left\|\frac{\nabla_{H} f}{|x'|^{\beta}}\right\|_{L^{q}(\mathbb{G})},
\end{equation}
where $|\cdot|$ is the Euclidean norm on $\mathbb{R}^{N}$ and $\mathcal{L}_{p}$ is the $p$-sub-Laplacian operator defined by \eqref{pLap}.
\end{thm}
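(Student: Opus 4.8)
The plan is to run the same divergence-theorem-plus-Hölder scheme that proved Theorem \ref{LpCKN}, but now applied to the vector field $\nabla_H f$ rather than to $f$ itself, exploiting that $\mathrm{div}_H(\nabla_H f)=\mathcal{L}f$ and, more generally, that $\mathcal{L}_p f=\mathrm{div}_H(|\nabla_H f|^{p-2}\nabla_H f)$. Concretely, I would start from the quantity $\int_{\mathbb G}|\nabla_H f|^p |x'|^{-\gamma}\,dx$ and integrate by parts using the identity \eqref{divgamma}, namely ${\rm div}_H\!\left(x'/|x'|^{\gamma}\right)=(N-\gamma)/|x'|^{\gamma}$. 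Writing $|\nabla_H f|^p|x'|^{-\gamma} = |\nabla_H f|^{p-2}\nabla_H f\cdot \nabla_H f\,|x'|^{-\gamma}$ is not quite the right grouping; instead I expect the correct manipulation is to write
\[
\int_{\mathbb G}\frac{|\nabla_H f|^p}{|x'|^{\gamma}}\,dx
=\frac{1}{N-\gamma}\int_{\mathbb G}|\nabla_H f|^p\,{\rm div}_H\!\left(\frac{x'}{|x'|^{\gamma}}\right)dx
=-\frac{1}{N-\gamma}\int_{\mathbb G}\frac{x'}{|x'|^{\gamma}}\cdot\nabla_H\bigl(|\nabla_H f|^p\bigr)\,dx,
\]
and then expand $\nabla_H(|\nabla_H f|^p)=p|\nabla_H f|^{p-2}\nabla_H f\cdot\nabla_H(\nabla_H f)$ and reorganise so that a factor of $\mathcal{L}_p f=\mathrm{div}_H(|\nabla_H f|^{p-2}\nabla_H f)$ appears after a further integration by parts. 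This is where the constant $N+\gamma(p-1)-p$ should emerge, rather than $N-\gamma$: one integration by parts produces $N-\gamma$, a second (moving the derivative off $\nabla_H f$ onto the weight and onto $|\nabla_H f|^{p-2}\nabla_H f$) shifts it.

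Once the $\mathcal L_p f$ term is isolated, the two steps remaining are (i) a Schwarz-type bound replacing $x'\cdot(\text{something})$ by $|x'|\,|\text{something}|$, so that $|x'|^{-\gamma}|x'|=|x'|^{1-\gamma}=|x'|^{-(\alpha+\beta)}$ gets split as $|x'|^{-\alpha}\cdot|x'|^{-\beta}$, and (ii) Hölder's inequality with exponents $p$ and $q=p/(p-1)$ to separate $\|\,|x'|^{-\alpha}\mathcal L_p f\,\|_{L^p}$ from $\|\,|x'|^{-\beta}\nabla_H f\,\|_{L^q}$. The powers work out: the $p$-power piece carries $\mathcal L_p f$ with weight $|x'|^{-\alpha}$, and the $q$-power piece carries $|\nabla_H f|^{p-2}\nabla_H f$, i.e. a term of homogeneity $p-1$ in $\nabla_H f$, against the weight $|x'|^{-\beta}$; since $(p-1)q=p$ this reassembles as $\|\,|x'|^{-\beta}\nabla_H f\,\|_{L^q}^{p-1}\cdot(\dots)$ — here I would double-check that the exponent on the second factor is $1$, not $p-1$, matching the statement \eqref{eqLpHR}. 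The sign/positivity constraint $\frac{p-N}{p-1}\le\gamma\le 0$ is exactly what makes $N+\gamma(p-1)-p\ge 0$ (lower bound) and presumably what guarantees that certain boundary-free integrations by parts and a discarded nonnegative term (as in the critical-case proof, where an $\epsilon^2$-term was thrown away) go in the right direction; I would verify that the $\gamma\le 0$ half is used to control $\nabla_H(|x'|^{-\gamma})$ having the favourable sign, or to drop a term like in \eqref{diveqLHS}.

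The main obstacle I anticipate is the bookkeeping in the double integration by parts when $p\neq 2$: the expression $\nabla_H(\nabla_H f)$ is really a matrix of second horizontal derivatives, the non-commutativity of the $X_k$'s means $X_jX_k\neq X_kX_j$, and one must be careful that after pairing against $|\nabla_H f|^{p-2}\nabla_H f$ and against $x'/|x'|^\gamma$ the cross terms recombine exactly into $\mathcal L_p f$ plus lower-order pieces with a definite sign. A secondary subtlety is justifying that all boundary terms vanish — $f\in C_0^\infty(\mathbb G\setminus\{x'=0\})$ handles compact support and the singular set, so this should be routine, but near $\{x'=0\}$ one may prefer, as in Theorem \ref{critcaseMAIN}, to regularise $|x'|$ by $\sqrt{|x'|^2+\epsilon^2}$, carry out the estimate, and let $\epsilon\to 0$, discarding a nonnegative $O(\epsilon^2)$ remainder. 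Sharpness is not claimed in the statement, so no extremiser computation is needed.
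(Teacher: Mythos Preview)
Your proposal is correct and takes essentially the same approach as the paper: integrate by parts using \eqref{divgamma} to obtain \eqref{pro1}, separately expand $\int_{\mathbb G}|x'|^{-\gamma}(\mathcal L_p f)(x'\cdot\nabla_H f)\,dx$ by integrating the divergence form of $\mathcal L_p f$ by parts, and combine the two to reach the exact identity \eqref{pro2}, whose extra term $\gamma\int_{\mathbb G} |\nabla_H f|^{p-2}|x'\cdot\nabla_H f|^{2}|x'|^{-\gamma-2}\,dx$ is bounded below via Cauchy--Schwarz precisely because $\gamma\le 0$ (confirming your guess about where that hypothesis enters). The upper bound by Schwarz and H\"older with exponents $(p,q)$ is exactly as you describe, no $\epsilon$-regularisation is needed since $f$ is supported away from $\{x'=0\}$, and your caution about the commutator bookkeeping in the second integration by parts is well placed.
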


\begin{cor}
When $\beta=0,\,\alpha=-1$ and $q=\frac{p}{p-1}$, the inequality \eqref{eqLpHR} gives a  stratified group Rellich type inequality for $\mathcal{L}_{p}$:
\begin{equation}\label{6.3}
\left\|\nabla_{H} f
\right\|_{L^{p}(\mathbb{G})}\leq\frac{p}{N-p}\left\||x'| \mathcal{L}_{p} f\right\|_{L^{p}(\mathbb{G})},\;1<p<N,
\end{equation}
for all $f\in C_{0}^{\infty}(\mathbb{G}\backslash\{x'=0\}).$
\end{cor}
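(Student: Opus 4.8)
The plan is to derive the Corollary as a direct specialization of Theorem~\ref{LpHR}; no new analytic input is needed. I would take $\beta=0$ and $\alpha=-1$, so that $\gamma=\alpha+\beta+1=0$, keeping $q=\frac{p}{p-1}$ as the H\"older conjugate exponent from the theorem.

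First I would check admissibility. Theorem~\ref{LpHR} requires $1<p<N$ --- exactly the range in the Corollary --- together with $\frac{p-N}{p-1}\le\gamma\le 0$. With $\gamma=0$ the upper inequality holds with equality, and since $1<p<N$ gives $p-N<0<p-1$, we get $\frac{p-N}{p-1}<0=\gamma$, so the lower inequality holds as well. Hence Theorem~\ref{LpHR} applies with these parameters for every $f\in C_0^\infty(\mathbb{G}\backslash\{x'=0\})$.

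Then I would substitute into \eqref{eqLpHR}. With $\gamma=0$ the constant $\frac{N+\gamma(p-1)-p}{p}$ becomes $\frac{N-p}{p}$, the weights $|x'|^{\gamma/p}$ and $|x'|^{\beta}$ are identically $1$, and $\frac{1}{|x'|^{\alpha}}=|x'|$; thus \eqref{eqLpHR} reduces to
\[
\frac{N-p}{p}\,\|\nabla_{H}f\|_{L^{p}(\mathbb{G})}^{p}\le\left\||x'|\,\mathcal{L}_{p}f\right\|_{L^{p}(\mathbb{G})}\,\|\nabla_{H}f\|_{L^{q}(\mathbb{G})}.
\]
Rearranging --- dividing by the common $\nabla_{H}f$ factor on the right and using $N>p$ so that $\frac{N-p}{p}>0$ --- isolates $\|\nabla_{H}f\|_{L^{p}(\mathbb{G})}$ and yields \eqref{6.3} with the stated constant $\frac{p}{N-p}$. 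I expect the only point requiring attention to be the exponent and homogeneity bookkeeping in this last step --- the choice $q=\frac{p}{p-1}$ is made precisely so that the $(p-1)$-homogeneous terms appearing in the proof of Theorem~\ref{LpHR} recombine into $\|\nabla_{H}f\|_{L^{p}(\mathbb{G})}^{p}$ --- and there is no substantive obstacle, since the divergence identity for $\mathcal{L}_{p}$ and the H\"older estimate are already contained in Theorem~\ref{LpHR}.
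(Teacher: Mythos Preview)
Your approach---substitute $\alpha=-1$, $\beta=0$, $\gamma=0$ into Theorem~\ref{LpHR} and simplify---is exactly what the paper intends (the corollary is stated without a separate proof). Your admissibility check and the substitution are both correct, and you arrive at
\[
\frac{N-p}{p}\,\|\nabla_{H}f\|_{L^{p}(\mathbb{G})}^{p}\le\bigl\||x'|\,\mathcal{L}_{p}f\bigr\|_{L^{p}(\mathbb{G})}\,\|\nabla_{H}f\|_{L^{q}(\mathbb{G})},\qquad q=\tfrac{p}{p-1}.
\]

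The gap is in your last step. The factor on the right is $\|\nabla_{H}f\|_{L^{q}}$, an $L^{q}$ norm, not $\|\nabla_{H}f\|_{L^{p}}^{p-1}$; for $p\neq 2$ these are different quantities and there is no ``common $\nabla_{H}f$ factor'' to divide by. Your remark that ``the $(p-1)$-homogeneous terms \ldots\ recombine into $\|\nabla_{H}f\|_{L^{p}}^{p}$'' is not correct: the H\"older step \eqref{pro4} in the proof of Theorem~\ref{LpHR} pairs $|x'|^{-\alpha}\mathcal{L}_{p}f$ with exponent $p$ against $|x'|^{-\beta}\nabla_{H}f$ with exponent $q$, so it produces an honest $L^{q}$ norm of $\nabla_{H}f$, not a power of the $L^{p}$ norm. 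Only when $p=q=2$ does the division you propose go through and give \eqref{6.3} directly. For general $1<p<N$ the inequality \eqref{6.3} does not follow from \eqref{eqLpHR} by this route; the paper offers no additional argument here, so the difficulty you flagged as mere ``bookkeeping'' is in fact a real obstruction, and the stated corollary appears not to be a direct specialization of the theorem as written.
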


\begin{cor}
When $\alpha=0,\,\beta=-1$, the inequality \eqref{eqLpHR} implies the following Heisenberg-Pauli-Weyl type  uncertainly principle for $\mathcal{L}_{p},\;1<p<N$:
for each $f\in C^{\infty}_{0}(\mathbb{G}\backslash\{x'=0\})$ we have
\begin{equation}\label{UP1p}
\left\|\nabla_{H}f\right\|^{p}_{L^{p}(\mathbb{G})}
\leq\frac{p}{N-p}\left\| \mathcal{L}_{p}f\right\|_{L^{p}(\mathbb{G})}\left\||x'| \nabla_{H}f\right\|_{L^{q}(\mathbb{G})},\;\frac{1}{p}+\frac{1}{q}=1.
\end{equation}
\end{cor}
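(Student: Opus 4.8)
The plan is to obtain the corollary as a direct specialization of Theorem \ref{LpHR}, which we may assume; the only genuine task is to check admissibility of the parameters and then simplify. First I would set $\alpha=0$ and $\beta=-1$ in Theorem \ref{LpHR}, so that $\gamma:=\alpha+\beta+1=0$. The admissibility requirement $\frac{p-N}{p-1}\le\gamma\le 0$ then reads $\frac{p-N}{p-1}\le 0\le 0$: the right-hand inequality is automatic, and the left-hand one holds because $1<p<N$ forces $p-N<0$ while $p-1>0$, so $\frac{p-N}{p-1}<0$. Hence the hypotheses of Theorem \ref{LpHR} are met for every $f\in C_{0}^{\infty}(\mathbb{G}\backslash\{x'=0\})$ and every $1<p<N$ with $\frac{1}{p}+\frac{1}{q}=1$.

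Next I would substitute these values into \eqref{eqLpHR}. Since $\gamma=0$ the weight $|x'|^{-\gamma/p}$ is identically $1$ and the constant on the left becomes $\frac{N+\gamma(p-1)-p}{p}=\frac{N-p}{p}$, so the left-hand side of \eqref{eqLpHR} is $\frac{N-p}{p}\|\nabla_{H}f\|_{L^{p}(\mathbb{G})}^{p}$. On the right, $\alpha=0$ gives $|x'|^{-\alpha}=1$ and $\beta=-1$ gives $|x'|^{-\beta}=|x'|$, so \eqref{eqLpHR} becomes
\[
\frac{N-p}{p}\,\|\nabla_{H}f\|_{L^{p}(\mathbb{G})}^{p}\ \le\ \|\mathcal{L}_{p}f\|_{L^{p}(\mathbb{G})}\,\left\| |x'|\,\nabla_{H}f\right\|_{L^{q}(\mathbb{G})}.
\]
Because $1<p<N$ the factor $\frac{N-p}{p}$ is strictly positive, so dividing through by it yields exactly \eqref{UP1p}. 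For the corollary itself there is essentially no obstacle: it is bookkeeping of the three weighted norms plus noting the sign of $N-p$.

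The substance lies in Theorem \ref{LpHR}, and for completeness let me indicate how I would prove that. I would follow the same divergence-theorem template as in the proof of Theorem \ref{LpCKN}, but applied to $|\nabla_{H}f|^{p}$ rather than $|f|^{p}$. Starting from $\int_{\mathbb{G}}|x'|^{-\gamma}|\nabla_{H}f|^{p}\,dx$, I would use \eqref{divgamma} to replace $|x'|^{-\gamma}$ by $\frac{1}{N-\gamma}{\rm div}_{H}(x'/|x'|^{\gamma})$ and integrate by parts; writing $u:=x'\cdot\nabla_{H}f$ and using $X_{k}x'_{j}=\delta_{kj}$, the quantity $x'\cdot\nabla_{H}(|\nabla_{H}f|^{p})$ becomes $p|\nabla_{H}f|^{p-2}\nabla_{H}f\cdot\nabla_{H}u-p|\nabla_{H}f|^{p}$ up to horizontal-commutator corrections. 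After rearranging, a second integration by parts moves the divergence onto $|\nabla_{H}f|^{p-2}\nabla_{H}f$, producing $\mathcal{L}_{p}f$ together with a term proportional to $\gamma\int_{\mathbb{G}}|x'|^{-\gamma-2}u^{2}|\nabla_{H}f|^{p-2}\,dx$; here the hypothesis $\gamma\le 0$ is exactly what lets one absorb this term in the favourable direction, since $u^{2}|\nabla_{H}f|^{p-2}\le|x'|^{2}|\nabla_{H}f|^{p}$ pointwise. Collecting terms and using $|u|\le|x'|\,|\nabla_{H}f|$ gives $\frac{N+\gamma(p-1)-p}{p}\int_{\mathbb{G}}|x'|^{-\gamma}|\nabla_{H}f|^{p}\,dx\le\int_{\mathbb{G}}|x'|^{1-\gamma}|\nabla_{H}f|\,|\mathcal{L}_{p}f|\,dx$, and one Hölder step with exponents $p$ and $q$, splitting $|x'|^{1-\gamma}=|x'|^{-\alpha}|x'|^{-\beta}$, finishes \eqref{eqLpHR}. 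The hard part, relative to the Euclidean case, will be the horizontal commutators $[X_{j},X_{k}]$ that appear in $x'\cdot\nabla_{H}(|\nabla_{H}f|^{p})$: one must verify via the explicit form \eqref{Xk0} of the generators that they do not damage the identity, and this is the only place where the stratified structure — rather than a literal $\mathbb{R}^{N}$ computation — really enters.
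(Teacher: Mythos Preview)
Your derivation of the corollary is correct and is exactly the paper's (implicit) argument: specialize Theorem~\ref{LpHR} to $\alpha=0$, $\beta=-1$, so $\gamma=0$, note that the admissibility window $\frac{p-N}{p-1}\le 0\le 0$ holds for $1<p<N$, and read off \eqref{UP1p} after dividing by the positive constant $\frac{N-p}{p}$. Your supplementary sketch of Theorem~\ref{LpHR} also tracks the paper's proof closely (divergence identity \eqref{divgamma}, an integration by parts producing $\mathcal{L}_{p}f$, the sign condition $\gamma\le 0$ to absorb the $|x'\cdot\nabla_{H}f|^{2}$ term, then H\"older); the one place you flag as delicate, the commutators $[X_{j},X_{k}]$, is handled in the paper by the identity $\nabla_{H}f\cdot\nabla_{H}(x'\cdot\nabla_{H}f)=|\nabla_{H}f|^{2}+\tfrac{1}{2}\,x'\cdot\nabla_{H}|\nabla_{H}f|^{2}$, which relies on the explicit form \eqref{Xk0}.
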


\begin{proof}[Proof of Theorem \ref{LpHR}]
As in the proof of Theorem \ref{LpCKN} we have
\begin{multline}\label{pro1}
\int_{\mathbb{G}}
\frac{|\nabla_{H}f(x)|^{p}}
{|x'|^{\gamma}}dx
 =\frac{1}{N-\gamma}\int_{\mathbb{G}}|\nabla_{H}f(x)|^{p} {\rm div}_{H}\left(\frac{x'}{|x'|^{\gamma}}\right)dx
\\  =-\frac{1}{N-\gamma}\int_{\mathbb{G}} \frac{p}{2}
|\nabla_{H}f(x)|^{p-2}\frac{ x'\cdot\nabla_{H}|\nabla_{H}f(x)|^{2}}{|x'|^{\gamma}}dx
\\
=\frac{p}{2(\gamma-N)} \int_{\mathbb{G}}
|\nabla_{H}f(x)|^{p-2}\frac{x'\cdot\nabla_{H}|\nabla_{H}f(x)|^{2}}{|x'|^{\gamma}}dx.
\end{multline}

We also have

\begin{align*}
\int_{\mathbb{G}} \frac{\mathcal{L}_{p}f}{|x'|^{\gamma}}
 x'\cdot\nabla_{H}f(x) dx
& = \int_{\mathbb{G}}
\frac{{\rm div}_{H}(|\nabla_{H}f(x)|^{p-2}\nabla_{H}f(x))}{|x'|^{\gamma}} x'\cdot\nabla_{H}f(x)dx
\\ & =-\int_{\mathbb{G}}
|\nabla_{H}f(x)|^{p-2}\nabla_{H}f(x)\cdot\nabla_{H}\left(\frac{ x'\cdot\nabla_{H}f(x)}{|x'|^{\gamma}}\right) dx
\end{align*}
$$
=-\int_{\mathbb{G}}
|\nabla_{H}f(x)|^{p-2}\left(\frac{|\nabla_{H}f(x)|^{2}}
{|x'|^{\gamma}}+ \frac{ x'\cdot\nabla_{H}|\nabla_{H}f(x)|^{2}}{2|x'|^{\gamma}}-\frac{\gamma\left| x'\cdot\nabla_{H}f(x)\right|^{2} }{|x'|^{\gamma+2}}\right) dx,
$$
that is,
$$
\int_{\mathbb{G}}\frac{|\nabla_{H}f(x)|^{p-2}}{|x'|^{\gamma}}  x'\cdot\nabla_{H}|\nabla_{H}f(x)|^{2}dx
$$
$$
=2\gamma\int_{\mathbb{G}}|\nabla_{H}f(x)|^{p-2}\frac{\left| x'\cdot\nabla_{H}f(x)\right|^{2}}{|x'|^{\gamma+2}}dx-2\int_{\mathbb{G}}
\frac{|\nabla_{H}f(x)|^{p}}
{|x'|^{\gamma}}dx
$$
$$- 2\int_{\mathbb{G}}\frac{\mathcal{L}_{p}f}{|x'|^{\gamma}}
 x'\cdot\nabla_{H}f(x)dx.
$$
Putting this in the right hand side of \eqref{pro1} we obtain
$$
\int_{\mathbb{G}}
\frac{|\nabla_{H}f(x)|^{p}}
{|x'|^{\gamma}}dx
=\frac{p\gamma}{\gamma-N}\int_{\mathbb{G}}|\nabla_{H}f(x)|^{p-2}\frac{\left| x'\cdot\nabla_{H}f(x)\right|^{2}}{|x'|^{\gamma+2}}dx$$
$$-\frac{p}{\gamma-N}\int_{\mathbb{G}}
\frac{|\nabla_{H}f(x)|^{p}}
{|x'|^{\gamma}}dx
- \frac{p}{\gamma-N}\int_{\mathbb{G}}\frac{\mathcal{L}_{p}f}{|x'|^{\gamma}}
 x'\cdot\nabla_{H}f(x) dx.
$$
Thus,
\begin{multline}\label{pro2}
\int_{\mathbb{G}}\frac{\mathcal{L}_{p}f}{|x'|^{\gamma}}
 x'\cdot\nabla_{H}f(x)dx
\\=\frac{N-p-\gamma}{p}\int_{\mathbb{G}}
\frac{|\nabla_{H}f(x)|^{p}}
{|x'|^{\gamma}}dx
+\gamma\int_{\mathbb{G}}|\nabla_{H}f(x)|^{p-2}\frac{\left| x'\cdot\nabla_{H}f(x)\right|^{2}}{|x'|^{\gamma+2}}dx.
\end{multline}
Since $\gamma\leq 0,$ using Schwarz's inequality to the last integrants we get
\begin{multline}\label{pro3}
\int_{\mathbb{G}}\frac{\mathcal{L}_{p}f}{|x'|^{\gamma}}
 x'\cdot\nabla_{H}f(x) dx
\\=\frac{N-p-\gamma}{p}\int_{\mathbb{G}}
\frac{|\nabla_{H}f(x)|^{p}}
{|x'|^{\gamma}}dx
+\gamma\int_{\mathbb{G}}|\nabla_{H}f(x)|^{p-2}\frac{\left| x'\cdot\nabla_{H}f(x)\right|^{2}}{|x'|^{\gamma+2}}dx
\\\geq
\frac{N-p-\gamma}{p}\int_{\mathbb{G}}
\frac{|\nabla_{H}f(x)|^{p}}
{|x'|^{\gamma}}dx
+\gamma\int_{\mathbb{G}}\frac{|\nabla_{H}f(x)|^{p}}{|x'|^{\gamma}}dx
\\
=\frac{N+\gamma(p-1)-p}{p}\int_{\mathbb{G}}\frac{|\nabla_{H}f(x)|^{p}}{|x'|^{\gamma}}dx.
\end{multline}
On the other hand, again using Schwarz's inequality and H\"older's inequality we have
\begin{multline}\label{pro4}
\int_{\mathbb{G}}\frac{\mathcal{L}_{p}f}{|x'|^{\gamma}}
 x'\cdot\nabla_{H}f(x)dx
 \leq \int_{\mathbb{G}}\frac{\mathcal{L}_{p}f}{|x'|^{\gamma-1}}
\left|\nabla_{H}f(x)\right|dx
\\\leq \left(\int_{\mathbb{G}}\left|\frac{\mathcal{L}_{p} f}{|x'|^{\alpha}} \right|^{p}dx\right)^{\frac{1}{p}}\left(\int_{\mathbb{G}}\left|\frac{\nabla_{H} f}{|x'|^{\beta}}\right|^{q}dx\right)^{\frac{1}{q}}.
\end{multline}
Combining it with \eqref{pro3} we prove Theorem \ref{LpHR}.
\end{proof}

\subsection{Weighted versions}
To give an idea for obtaining more general improved weighted Hardy type inequalities let us conclude this paper with the following very short discussion of techniques from  \cite{Yen16} (see also \cite{FS08} and \cite{Ruzhansky-Suragan:Hardy}), now in the setting of stratified groups.

Consider the following weighted ${p}$-sub-Laplacian
\begin{equation}\label{wsubL}
\mathcal{L}_{p,\rho}f={\rm div}_{H}\left(\rho(x)|\nabla_{H}f|^{p-2}\nabla_{H}f\right), \quad 1<p<\infty,
\end{equation}
where $0\leq\rho\in C^{1}(\mathbb{G})$.
\begin{thm}\label{weightedthm1} Let $2\leq p<\infty$.
Let $0\leq F\in C^{\infty}(\mathbb{G})$ and $0\leq\eta\in L^{1}_{loc}(\mathbb{G})$ be such that
\begin{equation}\label{weighteddiv}
\eta F^{p-1}\leq-\mathcal{L}_{p,\rho}F,\quad {\rm a.e.\; in}\;\mathbb{G}.
\end{equation}
Then we have
\begin{equation}\label{weightedthm1ineqn}
\|\eta^{\frac{1}{p}}f\|^{p}_{L^{p}(\mathbb{G})}+C_{p}\left\|
\rho^{\frac{1}{p}}F\nabla_{H}\frac{f}{F}\right\|^{p}_{L^{p}(\mathbb{G})}
\leq\|\rho^{\frac{1}{p}}\nabla_{H}f\|^{p}_{L^{p}(\mathbb{G})},
\end{equation}
for all real-valued functions $f\in C^{\infty}_{0}(\mathbb{G})$. Here $C_{p}$ is a positive constant.
\end{thm}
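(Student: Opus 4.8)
The plan is to mimic the classical ground-state substitution (Picone-type) argument adapted to the weighted $p$-sub-Laplacian $\mathcal{L}_{p,\rho}$. First I would write $f = F v$ with $v = f/F$ (this is legitimate on the support of $f$ where $F>0$; the set $\{F=0\}$ needs a brief regularization remark) and compute $\nabla_H f = v\,\nabla_H F + F\,\nabla_H v$. The goal is to compare $|\nabla_H f|^p$ pointwise with $|v|^p|\nabla_H F|^p$ plus a genuinely positive remainder involving $F^p|\nabla_H v|^p$. The key elementary inequality is the vector inequality: for $2\le p<\infty$ there is $C_p>0$ such that
\begin{equation*}
|a+b|^p \ge |a|^p + p|a|^{p-2}a\cdot b + C_p|b|^p, \qquad a,b\in\mathbb{R}^N.
\end{equation*}
Applying this with $a = v\,\nabla_H F$ and $b = F\,\nabla_H v$ gives
\begin{equation*}
|\nabla_H f|^p \ge |v|^p|\nabla_H F|^p + p\,|v|^{p-2}v\,|\nabla_H F|^{p-2}\nabla_H F\cdot(F\nabla_H v) + C_p F^p|\nabla_H v|^p.
\end{equation*}

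Next I would multiply by $\rho$ and integrate over $\mathbb{G}$. The middle term is the crucial one: using $|v|^{p-2}v\, F \nabla_H v = \tfrac1p \nabla_H(|v|^p F^p) - |v|^p F^{p-1}\nabla_H F$ (the standard identity for the derivative of $|v|^p$, valid for real $v$), one rewrites
\begin{equation*}
p\int_{\mathbb{G}}\rho\,|v|^{p-2}v\,|\nabla_H F|^{p-2}\nabla_H F\cdot(F\nabla_H v)\,dx
= \int_{\mathbb{G}} \rho\,|\nabla_H F|^{p-2}\nabla_H F\cdot\nabla_H(|v|^p F)\,dx - p\int_{\mathbb{G}}\rho\,|v|^p F^{p-1}|\nabla_H F|^{p-2}\nabla_H F\cdot\nabla_H F\,dx.
\end{equation*}
Hmm — the cleaner route is: integrate by parts (divergence theorem, legitimate since $f\in C_0^\infty$) to move the derivative off $|v|^p F^p$ and onto $\rho|\nabla_H F|^{p-2}\nabla_H F$, producing exactly $-\mathcal{L}_{p,\rho}F$. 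Concretely, $\int \rho|\nabla_H F|^{p-2}\nabla_H F\cdot\nabla_H(|v|^p F)\,dx = -\int |v|^p F\,\mathcal{L}_{p,\rho}F\,dx$ should not be right dimensionally; rather the bookkeeping is designed so that the full middle contribution equals $-\int \frac{f^p}{F^{p-1}}\mathcal{L}_{p,\rho}F\,dx$ after using $f=Fv$. Then the hypothesis $\eta F^{p-1}\le -\mathcal{L}_{p,\rho}F$ gives $-\int \frac{f^p}{F^{p-1}}\mathcal{L}_{p,\rho}F\,dx \ge \int \eta\, f^p\,dx = \|\eta^{1/p}f\|_{L^p}^p$ (here $f^p$ means $|f|^p$; for nonnegative $f$ it is literal, in general one works with $|f|^p$ and $f$ real as stated).

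Putting the three pieces together yields
\begin{equation*}
\|\rho^{1/p}\nabla_H f\|_{L^p(\mathbb{G})}^p \ge \|\eta^{1/p}f\|_{L^p(\mathbb{G})}^p + C_p\int_{\mathbb{G}}\rho F^p|\nabla_H v|^p\,dx = \|\eta^{1/p}f\|_{L^p(\mathbb{G})}^p + C_p\Big\|\rho^{1/p}F\,\nabla_H\tfrac{f}{F}\Big\|_{L^p(\mathbb{G})}^p,
\end{equation*}
which is exactly \eqref{weightedthm1ineqn}. The main obstacle — and the place where I would spend the most care — is the regularization near $\{F=0\}$ and the justification of the integration by parts: $f/F$ need not be smooth where $F$ vanishes, so one should replace $F$ by $F+\varepsilon$ (or $F_\varepsilon$ a smooth positive approximant), carry out the argument with $v_\varepsilon = f/(F+\varepsilon)$, use that $-\mathcal{L}_{p,\rho}(F+\varepsilon) = -\mathcal{L}_{p,\rho}F$, control the error terms, and let $\varepsilon\to 0$ by dominated convergence (using $f\in C_0^\infty$ to localize and the local integrability of $\eta$). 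The restriction $p\ge 2$ enters precisely in the validity of the vector inequality with a fixed positive $C_p$; for $1<p<2$ the inequality must be replaced by its $p$-dependent analogue, as the statement already flags via Remark~\ref{remw2}.
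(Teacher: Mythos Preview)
Your approach is correct and essentially identical to the paper's: both apply the vector inequality $|a+b|^p\ge|a|^p+p|a|^{p-2}a\cdot b+C_p|b|^p$ with $a=v\nabla_HF$, $b=F\nabla_Hv$ (where $v=f/F$), multiply by $\rho$, integrate, and convert the cross term via integration by parts into $-\int|v|^p F\,\mathcal{L}_{p,\rho}F\,dx$ before invoking the hypothesis. The bookkeeping you hesitate over is resolved in the paper by writing the cross term as $\int\rho F|\nabla_HF|^{p-2}\nabla_HF\cdot\nabla_H(|v|^p)\,dx$, integrating by parts, and using the product rule ${\rm div}_H(\rho F|\nabla_HF|^{p-2}\nabla_HF)=F\mathcal{L}_{p,\rho}F+\rho|\nabla_HF|^p$, whose second piece cancels the first term of the pointwise bound; your regularization remark near $\{F=0\}$ is in fact more careful than the paper, which omits it.
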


\begin{proof}[Proof of Theorem \ref{weightedthm1}]
For all $x,y\in \mathbb{R}^{n}$ there exists a positive number $C_{p}$ such that
\begin{equation}\label{anyxy}
|x|^{p}+ C_{p}|y|^{p}+p|x|^{p-2} x\cdot y\leq |x+y|^{p},\quad 2\leq p<\infty.
\end{equation}
Thus,
\begin{multline} |g|^{p}|\nabla_{H}F|^{p}+C_{p}F^{p}|\nabla_{H}g|^{p}+F|\nabla_{H}F|^{p-2}\nabla_{H}F\cdot\nabla_{H}|g|^{p}
\\\leq|g\nabla_{H}F+F\nabla_{H}g|^{p}=|\nabla_{H}f|^{p},
\end{multline}
where $g=\frac{f}{F}.$
It follows that
\begin{align*}
\int_{\mathbb{G}}\rho(x)|\nabla_{H}f(x)|^{p}dx & \geq \int_{\mathbb{G}}\rho(x)|\nabla_{H}F(x)|^{p}|g(x)|^{p}dx  \\
& +C_{p}\int_{\mathbb{G}}\rho(x)|\nabla_{H}g(x)|^{p}|F(x)|^{p}dx \\
& -\int_{\mathbb{G}}{\rm div}_{H}(\rho(x)F(x)|\nabla_{H}F(x)|^{p-2}\nabla_{H}F(x))|g(x)|^{p}dx  \\
&  \geq C_{p}\int_{\mathbb{G}}\rho(x)|\nabla_{H}g(x)|^{p}|F(x)|^{p}dx \\
& +\int_{\mathbb{G}}{\rm -div}_{H}(\rho(x)|\nabla_{H}F(x)|^{p-2}\nabla_{H}F(x))F(x)|g(x)|^{p}dx.
\end{align*}
Using \eqref{weighteddiv} this implies that
\begin{equation}
\int_{\mathbb{G}}\eta(x)|g(x)|^{p}|F(x)|^{p}dx+ C_{p}\int_{\mathbb{G}}\rho(x)|\nabla_{H}g(x)|^{p}|F(x)|^{p}dx\leq\int_{\mathbb{G}}\rho(x)|\nabla_{H}f(x)|^{p}dx.
\end{equation}
Since $g=\frac{f}{F}$ we arrive at
\begin{equation}
\|\eta^{\frac{1}{p}}f\|^{p}_{L^{p}(\mathbb{G})}+C_{p}
\left\|\rho^{\frac{1}{p}}F\nabla_{H}\frac{f}{F}\right\|^{p}_{L^{p}(\mathbb{G})}
\leq\|\rho^{\frac{1}{p}}\nabla_{H}f\|^{p}_{L^{p}(\mathbb{G})},
\end{equation}
proving \eqref{weightedthm1ineqn}.
\end{proof}

\begin{rem}\label{remw1}
For $p=2$ there is equality in \eqref{anyxy} with $C_{2}=1$, that is, the above proof gives the following remainder formula
\begin{equation}\label{p=2}
\left\|
\rho^{\frac{1}{2}}F\nabla_{H}\frac{f}{F}\right\|^{2}_{L^{2}(\mathbb{G})}=
\|\rho^{\frac{1}{2}}\nabla_{H}f\|^{2}_{L^{2}(\mathbb{G})}-\|\eta^{\frac{1}{2}}f\|^{2}_{L^{2}(\mathbb{G})}.
\end{equation}
\end{rem}

\begin{rem}\label{remw2}
For $1<p<2$ the inequality \eqref{anyxy} can be stated as
for all $x,y\in \mathbb{R}^{n}$ there exists a positive number $C_{p}$ (see e.g. \cite[Lemma 4.2]{Lind:p-ineq}) such that
\begin{equation}
|x|^{p}+ C_{p}\frac{|y|^{p}}{(|x|+|y|)^{2-p}}+p|x|^{p-2}x\cdot y\leq |x+y|^{p},\quad 1<p<2.
\end{equation}
In turn, from the proof it follows that
\begin{equation}\label{p1-2}
\|\eta^{\frac{1}{p}}f\|^{p}_{L^{p}(\mathbb{G})}
+C_{p}\left\|\rho^{\frac{1}{2}}\left(\left|\frac{f}{F}\nabla_{H}F\right|
+F\left|\nabla_{H}\frac{f}{F}\right|\right)^{\frac{p-2}{2}}|F|\nabla_{H}\frac{f}{F}\right\|^{2}_{L^{2}(\mathbb{G})}
\leq\|\rho^{\frac{1}{p}}\nabla_{H}f\|^{p}_{L^{p}(\mathbb{G})}
\end{equation}
for all real-valued $f\in C^{\infty}_{0}(\mathbb{G})$.
\end{rem}

\begin{prop}\label{thetacor}
For $f\in C_{0}^{\infty}(\mathbb{G})$ we have
\begin{equation}\label{coreq} \left\|\frac{f}{|x'|}\right\|_{L^{p}}\leq\frac{p}{\theta-p-2} \left\|\nabla_{H}f\right\|_{L^{p}},\quad 1<p<\theta-2,\;\theta\leq 2+N,\; \theta\in\mathbb{R}.
\end{equation}
\end{prop}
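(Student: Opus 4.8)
The plan is to deduce \eqref{coreq} from the weighted framework of Theorem~\ref{weightedthm1}, together with its counterpart for $p=2$ in Remark~\ref{remw1} and for $1<p<2$ in Remark~\ref{remw2}, applied with the trivial weight $\rho\equiv1$ (so that $\mathcal{L}_{p,\rho}=\mathcal{L}_{p}$) and with a radial supersolution $F=|x'|^{-\sigma}$ for a suitably chosen $\sigma>0$. The point is that if one discards the nonnegative remainder term $C_{p}\|\rho^{1/p}F\nabla_{H}(f/F)\|^{p}_{L^{p}}$ from \eqref{weightedthm1ineqn} (and the analogous nonnegative term from \eqref{p1-2}, while for $p=2$ one drops the nonnegative first term in \eqref{p=2}), what remains is the single inequality
$$\|\eta^{1/p}f\|^{p}_{L^{p}(\mathbb{G})}\leq\|\nabla_{H}f\|^{p}_{L^{p}(\mathbb{G})}$$
valid for every $\eta\ge0$ with $\eta F^{p-1}\le-\mathcal{L}_{p}F$ a.e. So the whole matter reduces to finding a good pointwise supersolution $F$.

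First I would compute $-\mathcal{L}_{p}F$ for $F=|x'|^{-\sigma}$. Since $F$ depends only on $x'$, the explicit form \eqref{Xk0} (as already used to obtain \eqref{gradgamma} and \eqref{divgamma}) gives $\nabla_{H}F=-\sigma\,x'/|x'|^{\sigma+2}$, hence $|\nabla_{H}F|^{p-2}\nabla_{H}F=-\sigma^{p-1}x'/|x'|^{(\sigma+1)(p-1)+1}$, and then \eqref{divgamma} with $\gamma=(\sigma+1)(p-1)+1$ yields
$$-\mathcal{L}_{p}F=\sigma^{p-1}\bigl(N-1-(\sigma+1)(p-1)\bigr)\frac{F^{p-1}}{|x'|^{p}}.$$
So one may take $\eta=\sigma^{p-1}\bigl(N-1-(\sigma+1)(p-1)\bigr)|x'|^{-p}$ whenever the bracket is nonnegative. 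Maximising the function $\sigma\mapsto\sigma^{p-1}\bigl(N-1-(\sigma+1)(p-1)\bigr)=\sigma^{p-1}(N-p)-(p-1)\sigma^{p}$ over $\sigma>0$, its maximum is attained at $\sigma=(N-p)/p$ and equals $\bigl((N-p)/p\bigr)^{p}$; with this choice $\eta=\bigl((N-p)/p\bigr)^{p}|x'|^{-p}$, so one gets $\left\|\frac{f}{|x'|}\right\|_{L^{p}}\le\frac{p}{N-p}\|\nabla_{H}f\|_{L^{p}}$. Since the hypotheses $1<p<\theta-2\le N$ force $0<\theta-p-2\le N-p$, we have $\frac{p}{N-p}\le\frac{p}{\theta-p-2}$ and \eqref{coreq} follows. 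Equivalently, one can use $\sigma=(\theta-p-2)/p$ straight away, the inequality $N-1-(\sigma+1)(p-1)\ge(\theta-p-2)/p$ needed to run the argument being exactly equivalent to $\theta\le N+2$.

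The only real difficulty is technical: $F=|x'|^{-\sigma}$ is singular on $\{x'=0\}$ and hence not literally an admissible function in Theorem~\ref{weightedthm1}, whereas $f\in C^{\infty}_{0}(\mathbb{G})$ need not vanish there. I would resolve this by the same regularisation used in the proof of Theorem~\ref{critcaseMAIN}, replacing $F$ by $F_{\varepsilon}=(|x'|^{2}+\varepsilon^{2})^{-\sigma/2}\in C^{\infty}(\mathbb{G})$: a direct computation shows that $-\mathcal{L}_{p}F_{\varepsilon}$ equals the expected main term plus a manifestly nonnegative $\varepsilon^{2}$-term, whence $-\mathcal{L}_{p}F_{\varepsilon}\ge\eta_{\varepsilon}F_{\varepsilon}^{p-1}$ with $\eta_{\varepsilon}=\sigma^{p-1}\bigl(N-1-(\sigma+1)(p-1)\bigr)(|x'|^{2}+\varepsilon^{2})^{-(p-1)}|x'|^{p-2}$. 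Applying Theorem~\ref{weightedthm1} (or its $p=2$ and $1<p<2$ analogues) to $F_{\varepsilon}$ and then letting $\varepsilon\to0$, the fact that $\eta_{\varepsilon}$ increases monotonically to $\sigma^{p-1}\bigl(N-1-(\sigma+1)(p-1)\bigr)|x'|^{-p}$ as $\varepsilon\downarrow0$ lets monotone convergence pass the limit through $\int_{\mathbb{G}}\eta_{\varepsilon}|f|^{p}\,dx$. One also uses $p<N$ to know that $|x'|^{-p}$ is locally integrable on $\mathbb{R}^{N}$, so that $f/|x'|\in L^{p}(\mathbb{G})$ and all the integrals are finite.
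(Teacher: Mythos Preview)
Your proof is correct and follows essentially the same route as the paper: apply Theorem~\ref{weightedthm1} (together with its $1<p<2$ companion \eqref{p1-2}) with $\rho\equiv1$ and the power-type supersolution $F=|x'|^{-\sigma}$, $\sigma=(\theta-p-2)/p$, then discard the nonnegative remainder. The only differences are cosmetic: the paper regularises by translating the first-stratum variable, $|x'_{\epsilon}|^{2}=\sum_{j}(x'_{j}+\epsilon)^{2}$, so that the identities \eqref{gradgamma}--\eqref{divgamma} apply verbatim and no extra $\epsilon^{2}$-term appears in $-\mathcal{L}_{p}F_{\epsilon}$, whereas you use $(|x'|^{2}+\epsilon^{2})^{-\sigma/2}$ as in the proof of Theorem~\ref{critcaseMAIN} and check directly that the resulting $\epsilon^{2}$-term has the right sign; and you additionally observe that the optimal choice $\sigma=(N-p)/p$ gives the sharp constant $p/(N-p)\le p/(\theta-p-2)$, which the paper does not mention.
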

\begin{proof}[Proof of Proposition \ref{thetacor}.]
In Theorem \ref{weightedthm1} taking $\rho=1$ and $$F_{\epsilon}=|x'_{\epsilon}|^{-\frac{\theta-p-2}{p}}
=\left((x'_{1}+\epsilon)^{2}+\ldots+(x'_{n}+\epsilon)^{2}\right)^{-\frac{\theta-p-2}{2p}}, $$
for a given $\epsilon>0,$ using the identity \eqref{gradgamma} we get
\begin{multline}\label{thmcond}
-\mathcal{L}_{p,1}F_{\epsilon} =-{\rm div}_{H}\left(|\nabla_{H}F_{\epsilon}|^{p-2}\nabla_{H}F_{\epsilon}\right) \\ =
-{\rm div}_{H}\left(|\nabla_{H}|x'_{\epsilon}|^{-\frac{\theta-p-2}{p}}|^{p-2}\nabla_{H}|x'_{\epsilon}|^{-\frac{\theta-p-2}{p}}\right)
\\
=\frac{\theta-p-2}{p}\left|\frac{\theta-p-2}{p}\right|^{p-2} \left(\frac{\theta-p-2}{p}-\theta+2+N\right)|x'_{\epsilon}|^{-\frac{(\theta-p-2)(p-1)}{p}-p}
\\
=\left(\left|\frac{\theta-p-2}{p}\right|^{p}+\frac{\theta-p-2}{p}\left|\frac{\theta-p-2}{p}\right|^{p-2}(-\theta+2+N)\right)
|x'_{\epsilon}|^{-\frac{(\theta-p-2)(p-1)}{p}-p}.
\end{multline}
If $1<p<\theta-2$ and $\theta\leq 2+N$, then
\eqref{thmcond} gives
\begin{equation}
-\mathcal{L}_{p,1}F_{\epsilon}\geq \left|\frac{\theta-p-2}{p}\right|^{p}\frac{1}{|x'_{\epsilon}|^{p}}F_{\epsilon}^{p-1},
\end{equation}
that is, according to the assumption in Theorem \ref{weightedthm1}, we can put
$$\eta(x)=\left|\frac{\theta-p-2}{p}\right|^{p}\frac{1}{|x'_{\epsilon}|^{p}}.$$
It shows that \eqref{weightedthm1ineqn} (and \eqref{p1-2}) implies \eqref{coreq}.
\end{proof}

Note that in the case of the Heisenberg group \eqref{coreq} was proved by D'Ambrosio in \cite{DAmbrosio-Difur04}. Here it is worth to recall that on the Heisenberg group
we have $Q=2+N.$

\end{document}